\newcounter{results}[section] % Uniform counters for lemmas, theorems, propositions etc
\theoremstyle{plain}
\newtheorem{theorem}[results]{Theorem}
\newtheorem{lemma}[results]{Lemma}
\newtheorem{proposition}[results]{Proposition}
\newtheorem*{theorem*}{Theorem}
\newtheorem*{lemma*}{Lemma}
\newtheorem*{proposition*}{Proposition}
\newtheorem*{corollary*}{Corollary}
\newtheorem*{exercise*}{Exercise}
\newtheorem*{fact*}{Fact}
\newtheorem*{claim*}{Claim}
\newtheorem*{observation*}{Observation}
\newtheorem*{question*}{Open question}
\theoremstyle{remark}
\newtheorem{remark}[results]{Remark}
\newtheorem*{remark*}{Remark}
\theoremstyle{definition}
\newtheorem{definition}[results]{Definition}
\newtheorem*{definition*}{Definition}
\newtheorem*{example*}{Example}
\numberwithin{equation}{section}
\crefname{figure}{Figure}{Figures}
        \renewcommand{\comma}{\ensuremath{\, \text{, }}}
        \newcommand{\comma}{\ensuremath{\, \text{, }}}
\newcommand{\N}{\ensuremath{\mathbb N}}%Natural numbers
\newcommand{\R}{\ensuremath{\mathbb R}}%Real numbers
\DeclarePairedDelimiter\abs{\lvert}{\rvert} % Absolute value
\newcommand{\sk}[1]{\ensuremath{\langle #1 \rangle}} % Scalar product 
\newcommand{\eqdef}{\ensuremath{\coloneqq}} % Equal in a definition.
\DeclareMathOperator{\diam}{diam}
\DeclareMathOperator{\Ric}{Ric} % Ricci tensor
\newcommand{\amb}{\ensuremath{M}} % Ambient manifold
\DeclareMathOperator{\Graph}{Graph} % Graph of an embedded surface
\colorlet{myGray}{gray}
\colorlet{myBlue}{blue}
\colorlet{myBlack}{black}
\colorlet{myBackground}{gray!10}
\title[Unknottedness of FBMS and self-shrinkers]{Unknottedness of \\ free boundary minimal surfaces and self-shrinkers}
\author{Sabine Chu and Giada Franz}
\newcommand\printaddress{
\setlength{\parindent}{17pt}
\footnotesize

\bigskip
\par 
{\scshape \noindent Sabine Chu}
\newline MIT, Department of Mathematics, Cambridge, MA 02139, USA.
\newline
\textit{E-mail address:} \texttt{srchu@mit.edu}
\newline
\par
{\scshape \noindent Giada Franz}
\newline Université Gustave Eiffel, CNRS, LAMA, 77420 Champs-sur-Marne, France.
\newline
\textit{E-mail address:} \texttt{giada.franz@cnrs.fr}
\newline
\par
}
\begin{document}
\begin{abstract}
    We study unknottedness for free boundary minimal surfaces in a three-dimensional Riemannian manifold with nonnegative Ricci curvature and strictly convex boundary, and for self-shrinkers in the three-dimensional Euclidean space.
    For doing so, we introduce the concepts of boundary graph for free boundary minimal surfaces and of graph at infinity for self-shrinkers. We prove that these surfaces are unknotted in the sense that any two such surfaces with isomorphic boundary graph or graph at infinity are smoothly isotopic.
\end{abstract}

\maketitle

\section{Introduction}\label{sec:intro}

Let $(M^3,g)$ be a three-dimensional Riemannian manifold, possibly noncompact or with nonempty boundary. Let $\Sigma$ be a smooth surface that is properly embedded in $\amb$, possibly with boundary $\partial\Sigma=\Sigma\cap\partial \amb$.  Recall that $\Sigma$ is a \textit{minimal surface} if its mean curvature $H$ is equal to zero. 
Moreover, if $\Sigma$ is minimal and meets $\partial \amb$ orthogonally, we call it a \textit{free boundary minimal surface}.
Equivalently, $\Sigma$ is a critical point of the area functional with respect to variations constraining the boundary of the surface to the boundary $\partial \amb$. 

Lawson in \cite{Lawson1970unknot} proved that minimal surfaces in the three-dimensional sphere $S^3$ are unknotted, building on \cite{Waldhausen1968}. Here, following Lawson's definition, we say that a surface is \emph{unknotted} if there exists an ambient isotopy mapping the surface to a fixed standardly embedded surface with the same topology (see \cref{sec:prelim} for more precise definition).
It is then natural to pose the same question for different ambient manifolds $\amb$. First, note that Lawson's proof works for minimal surfaces in simply connected three-dimensional manifolds with positive Ricci curvature (which are diffeomorphic to $S^3$ by result of Hamilton \cite{Hamilton1982}*{Theorem~1.1}).
Moreover, Meeks--Yau showed in \cite{MeeksYau1992}*{Theorem~5.1} that complete minimal surfaces in $\R^3$ with finite topological type are unknotted. 
The result was then extended to remove the assumption of finite topological type in \cite{FrohmanMeeks1997} and \cite{FrohmanMeeks2008}.
Note that some positivity assumption on the curvature of the ambient manifold is needed to prove unknottedness of minimal surfaces. Indeed, there are examples of knotted minimal surfaces in the hyperbolic space $\mathbb H^3$ by \cite{deOliveiraSoret1998}*{Theorem~2} (see also Remark~2 after the theorem).

Here, we consider two settings that are a priori quite different but actually display interesting similarities. Namely, we are interested in:
\begin{itemize}
\item Free boundary minimal surfaces in the unit ball $B^3\subset\R^3$, or more in general in a compact three-dimensional Riemannian manifold $(\amb^3,g)$ with nonnegative Ricci curvature and strictly convex boundary. Note that such a manifold $\amb$ is diffeomorphic to the unit ball $B^3$ by \cite{FraserLi2014}*{Theorem~2.11}.
\item Self-shrinkers with finite topology in $\R^3$. Recall that these are minimal surfaces in $\R^3$ with respect to the Gaussian metric $e^{-\abs{x}^2/4}g_{\R^3}$ and they appear as singularity models of the mean curvature flow (see \cite{Huisken1990}*{Theorem~3.5}).
\end{itemize}

Unfortunately, the problem in these settings turns out to be more complicated.
Consider for example the free boundary case. Note that in the unit ball $B^3\subset\R^3$, there exist free boundary minimal surfaces of the same genus and number of boundary components for which there is no ambient diffeomorphism mapping one into the other. For example, the surfaces with three boundary components (and large genus) obtained in \cite{KapouleasLi2017}*{Theorem~1.1} (see also \cite{Ketover2016FBMS}*{Theorem~1.1}) versus \cite{KarpukhinKusnerMcGrathStern2024}*{Theorem~1.2} are not ambient diffeomorphic. Note that Kapouleas and Li construct their surfaces by desingularizing the union of the critical catenoid and the equatorial disc, while the surfaces obtained by Karpukhin--Kusner--McGrath--Stern are doublings of the equatorial disc. This means that these constructions differ in the arrangement of the boundary components on the sphere, as displayed in \cref{fig:three-comp}.

A similar behavior is expected from self-shrinkers, for which the ends play the role of the boundary components.
Observe that Wang in \cite{Wang2016}*{Theorem~1.1} proved a structure theorem for the ends of a self-shrinker in $\R^3$. However, this is not as strong as the result for minimal surfaces in $\R^3$, for which the geometric arrangement of the ends is always the same (see e.g. \cite{MeeksYau1992}*{Theorem~4.1}).

These considerations suggest that any notion of unknottedness should take into account the arrangement of the boundary components of free boundary minimal surfaces and of the ends of self-shrinkers.
To this purpose, following \cite{Frohman1992}, we introduce the concept of boundary graph for free boundary minimal surfaces and of graph at infinity for self-shrinkers and we proved that, for any two such surfaces with the same genus and isomorphic graph, there exists an ambient diffeomorphism mapping one into the other.
In the next two sections we discuss in more detail the two settings separately.

\subsection{Free boundary minimal surfaces}
In this section, let us assume that $(M^3,g)$ is a compact Riemannian manifold with nonnegative Ricci curvature and nonempty strictly convex boundary.
The examples of free boundary minimal surfaces discussed above suggest the concept of \emph{boundary graph} $\Graph(\partial\Sigma,\partial M)$ for a properly embedded surface $\Sigma \subset M$. Essentially, we let the vertex set be the set of connected components of $\partial M\setminus\partial\Sigma$, and connect vertices whose corresponding components are both adjacent to some component of $\partial\Sigma$. See \cref{def:boundary-graph} for a more detailed definition. For instance, the boundary graphs for the surfaces in \cite{KapouleasLi2017} and \cite{KarpukhinKusnerMcGrathStern2024} are shown in the last column of \cref{fig:three-comp}. In particular, observe that these two graphs are not isomorphic, therefore there cannot exist an ambient diffeomorphism mapping one surface into the other.

\begin{figure}[htbp]
\centering
\begin{tikzpicture}
\node[inner sep=0pt] (catdisc) at (0,0){\includegraphics[width=.28\textwidth]{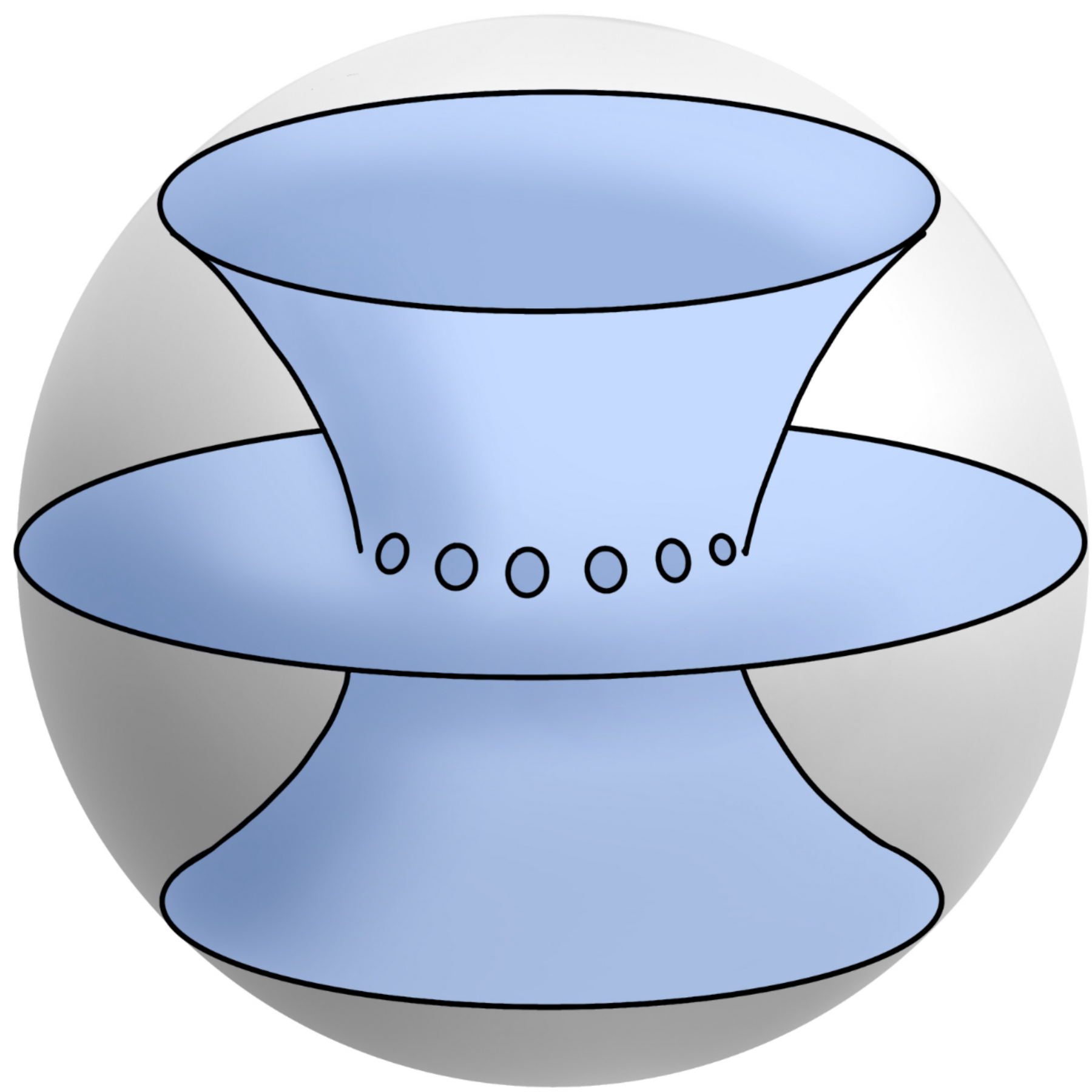}};
\node[inner sep=0pt] (ddisc) at (0,-5.4)
    {\includegraphics[width=.28\textwidth]{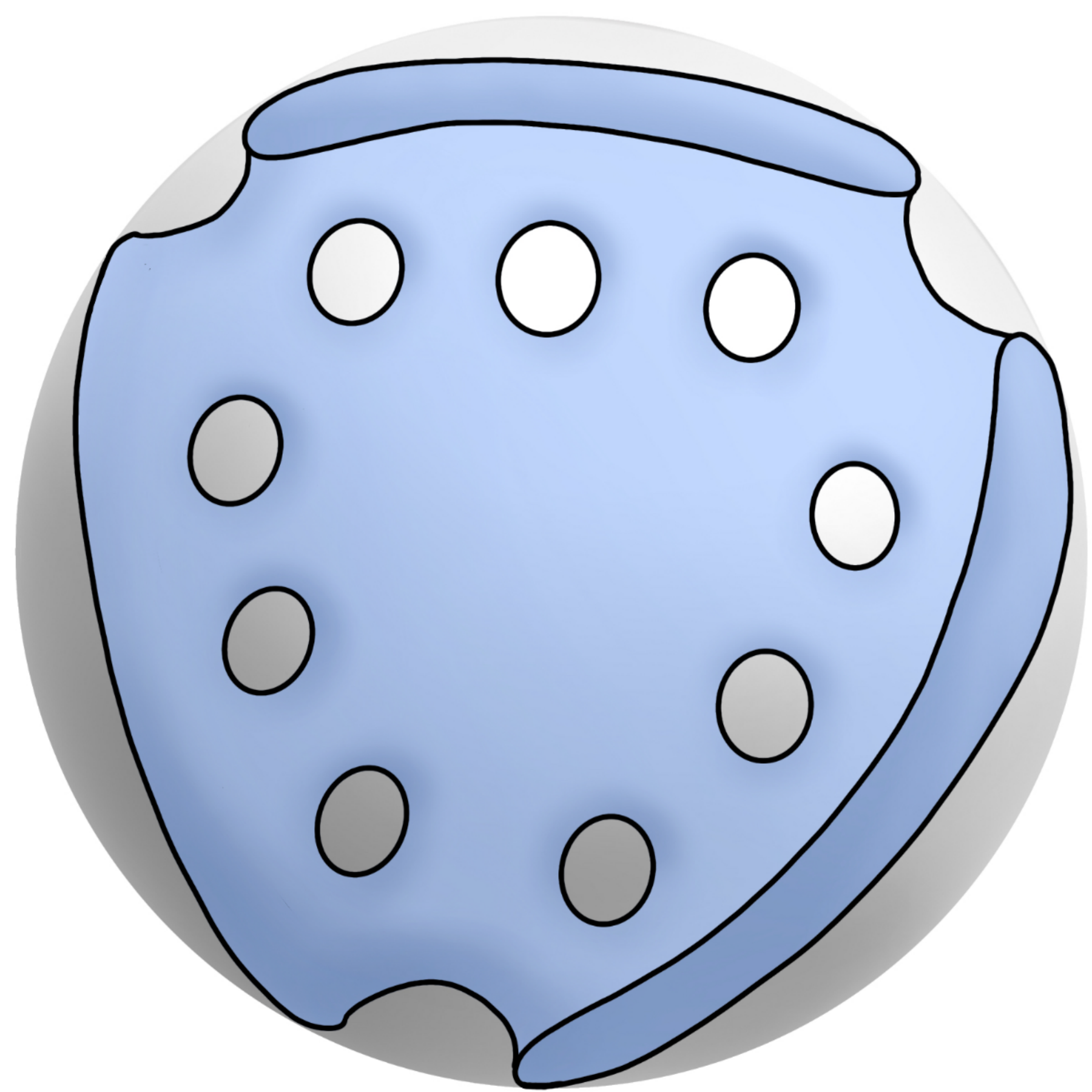}};
\node[inner sep=0pt] (catdiscbdry) at (6.3,0){\includegraphics[width=.28\textwidth]{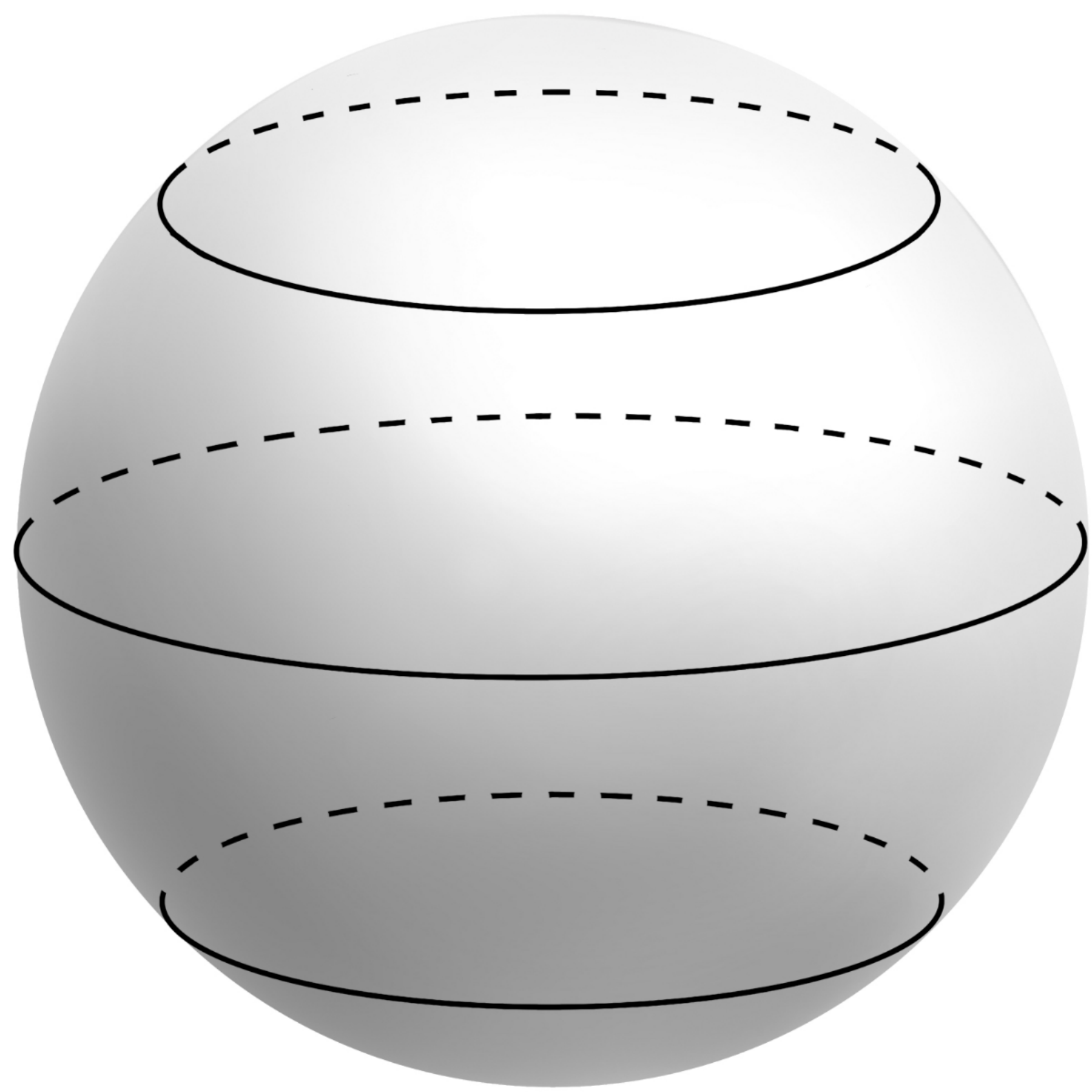}};
\node[inner sep=0pt] (ddiscbdry) at (6.3,-5.4)
    {\includegraphics[width=.28\textwidth]{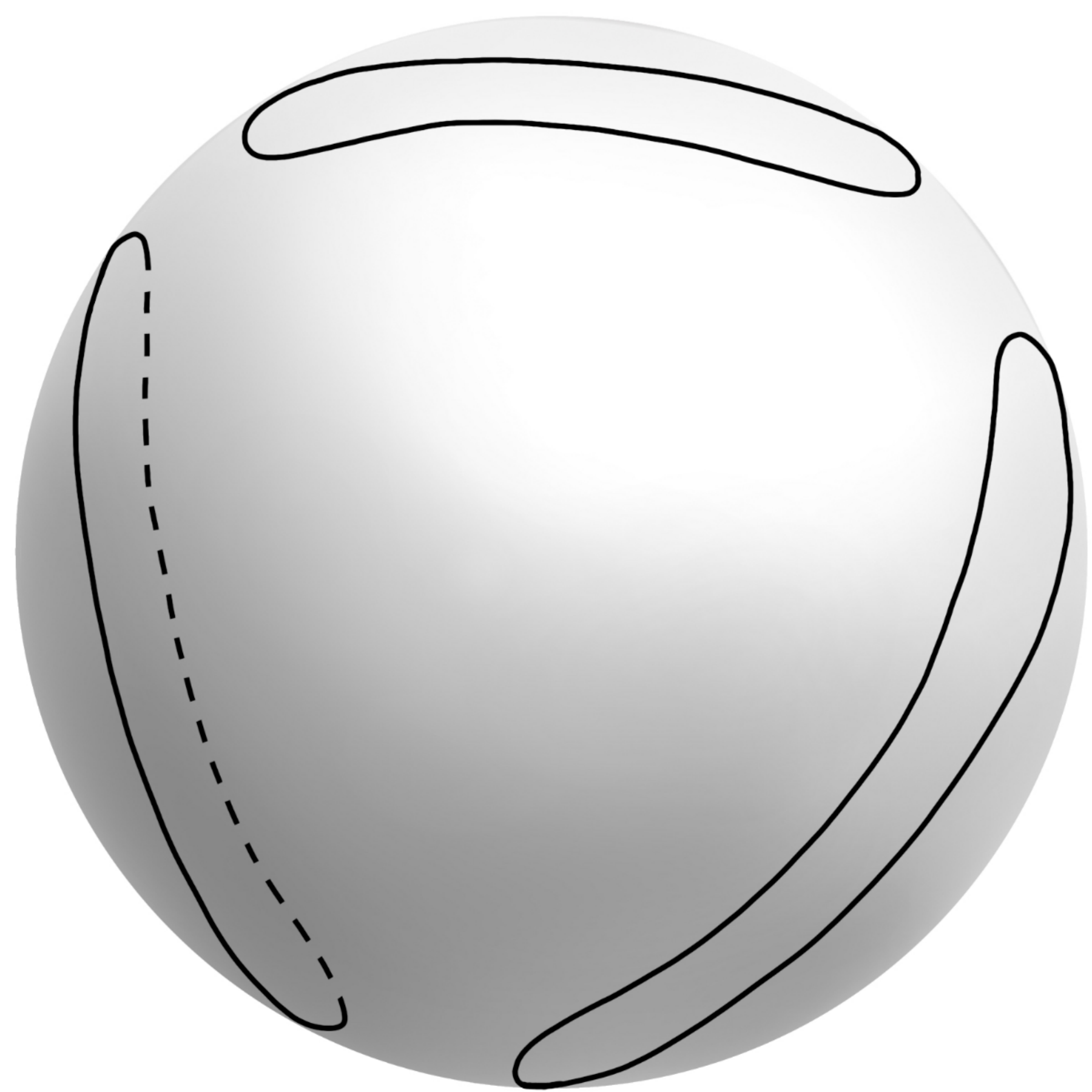}};

\coordinate (a) at (11.5,2.2);
\coordinate (b) at (11.5,0.8);
\coordinate (c) at (11.5,-0.8);
\coordinate (d) at (11.5,-2.2);
\draw[color=blue] (a) -- (b) -- (c) -- (d);
\foreach \x in {(a), (b), (c), (d)}{
        \fill[blue] \x circle[radius=3pt];
    }

\begin{scope}[yshift=5mm]
\coordinate (a) at (11.6,-6);
\coordinate (b) at (11.8,-4.5);
\coordinate (c) at (10.3,-6.8);
\coordinate (d) at (12.7,-7);
\draw[blue] (a) -- (b);
\draw[blue] (a) -- (c);
\draw[blue] (a) -- (d);
\end{scope}

\foreach \x in {(a), (b), (c), (d)}{
        \fill[blue] \x circle[radius=3pt];
    }
\end{tikzpicture}
\caption{Schematic representation of the free boundary minimal surfaces in \cite{KapouleasLi2017} (top row) and \cite{KarpukhinKusnerMcGrathStern2024} (bottom row), together with the arrangement of the boundary components on the sphere and the resulting boundary graphs. See \cite{SchulzGallery} for more precise numerical simulations of these surfaces.}
\label{fig:three-comp}
\end{figure}

However, given the notion of boundary graph, we are still able to prove that free boundary minimal surfaces in $M$ are unknotted, in the following sense.

\begin{restatable}{theorem}{freeboundary}\label{thm:freeboundary}
    Let $\Sigma,\ \Sigma'$ be (smooth, compact, properly embedded) free boundary minimal surfaces in a compact three-dimensional Riemannian manifold $(M^3,g)$ with nonnegative Ricci curvature and strictly convex boundary. Assume that $\Sigma$ and $\Sigma'$ have the same genus and that their boundary graphs $\Graph(\partial\Sigma,\partial M)$ and $\Graph(\partial\Sigma',\partial M)$ are isomorphic. Then, $\Sigma$ and $\Sigma'$ are smoothly isotopic in the ambient manifold $M^3$.
\end{restatable}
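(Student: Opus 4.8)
The plan is to reduce the theorem to a statement about abstract surfaces with a prescribed boundary arrangement on $\partial M \cong S^2$, and then invoke a uniqueness result for embeddings of such surfaces. The key geometric input — which must come from the hypotheses on $M$ — is that a free boundary minimal surface $\Sigma$ in such an $M$ has no ``local knotting'': concretely, I expect one first shows that $M \setminus \Sigma$ has a controlled topology, e.g.\ that the closure of each component of $M \setminus \Sigma$ is a handlebody (equivalently, $\Sigma$ is \emph{Heegaard-like} after capping). This is the analogue of Lawson's and Meeks--Yau's arguments: one uses that $\Sigma$ is minimal, that $\Ric_M \ge 0$ and $\partial M$ is strictly convex (so $M \cong B^3$ by Fraser--Li), and a mean-convexity / sweepout argument to rule out incompressible pieces in the complement. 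I would isolate this as a preliminary proposition: the complement of $\Sigma$ consists of two handlebodies, glued along $\Sigma$, with the induced decomposition of $\partial M$ recorded exactly by $\Graph(\partial\Sigma, \partial M)$.

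Next I would set up the combinatorial bookkeeping. The boundary $\partial \Sigma$ is a collection of disjoint circles in $S^2 = \partial M$, dividing $S^2$ into the regions that form the vertices of $\Graph(\partial\Sigma,\partial M)$; since circles in $S^2$ bound discs on both sides, this arrangement is, up to isotopy of $S^2$, completely determined by the graph together with, for each circle, the genus that $\Sigma$ contributes on each side. Here I would use the hypothesis that $\Sigma$ and $\Sigma'$ have the \emph{same genus} and isomorphic graphs to argue that the genus distribution can also be matched — either this is forced by the handlebody structure, or one needs a short lemma showing that for free boundary minimal surfaces the genus is carried in a standard way. The upshot: there is a diffeomorphism $\phi_0 : \partial M \to \partial M$ carrying $\partial\Sigma$ to $\partial\Sigma'$ and respecting the decompositions, and an abstract diffeomorphism $\Sigma \to \Sigma'$ compatible with $\phi_0$ on the boundary.

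Then I would extend $\phi_0$ to an ambient diffeomorphism. Because both complements $M \setminus \Sigma$ and $M \setminus \Sigma'$ are unions of two handlebodies, and diffeomorphisms of handlebodies are very flexible (any diffeomorphism of the boundary surface of a handlebody that extends over the handlebody does so, and here the relevant boundary data matches by construction), one glues the handlebody diffeomorphisms along $\Sigma$ to produce a diffeomorphism $\Phi : M \to M$ with $\Phi(\Sigma) = \Sigma'$. Finally, to upgrade ``ambient diffeomorphic'' to ``smoothly isotopic'', I would invoke that $M \cong B^3$, that every orientation-preserving diffeomorphism of $B^3$ fixing the boundary pointwise is isotopic to the identity (Cerf / Smale), and handle the orientation-reversing and boundary-rotation cases using the explicit geometry of $S^2$; alternatively one cites the isotopy extension theorem together with the fact that the two surfaces, now related by a diffeomorphism of the ball, can be connected by a path of embeddings.

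The main obstacle I anticipate is the preliminary proposition — showing the complement of a free boundary minimal surface is a pair of handlebodies. The compact free-boundary setting does not directly quote Meeks--Yau or Lawson, so one must adapt the minimal-surface topology arguments (catenoid estimate / mean-convex foliation near $\partial M$, plus a barrier argument using $\Ric_M \ge 0$) to the free boundary case, being careful about the behavior of competitor surfaces at $\partial M$. A secondary subtlety is the precise claim that isomorphic boundary graphs plus equal genus really do pin down the pair $(\partial M, \partial \Sigma)$ up to isotopy together with the genus splitting; if the graph alone does not determine the splitting, the theorem as stated would need the handlebody structure to supply the missing rigidity, so that step and the preliminary proposition are really intertwined.
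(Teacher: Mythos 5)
Your skeleton (complement pieces are handlebody-like, then a topological uniqueness step, then upgrade to isotopy via Cerf/Smale--Hatcher) parallels the paper, but the two load-bearing steps are left unproved, and one of them rests on a false claim. First, the geometric core --- that each component $C_i$ of $M\setminus\Sigma$ is a handlebody, equivalently that $\pi_1(\Sigma)\hookrightarrow\pi_1(C_i)$ is surjective --- is exactly what you defer as ``the main obstacle,'' with only a vague plan (sweepouts, catenoid estimates, mean-convex foliations) that you do not carry out and that is not the mechanism that works here. The actual argument is Lawson's universal-cover trick combined with a Frankel-type property: one passes to the universal Riemannian cover $\tilde C_i\to C_i$, which still has nonnegative Ricci curvature and convex boundary portion coming from $\partial M$, assumes the preimage of $\Sigma$ is disconnected, minimizes the sum of distances to two components (compactness is arranged by translating the minimizing sequence by deck transformations into a fixed ball), and derives a contradiction from the second-variation/splitting statement of Naff--Zhu, using Fraser--Li's Frankel property to know $\Sigma$ itself is connected. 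Without this (or some substitute), your ``preliminary proposition'' is simply missing.

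Second, your gluing step misstates the topology: it is \emph{not} true that any diffeomorphism of the boundary surface of a handlebody extends over the handlebody (a Dehn twist about a non-meridional curve does not), and even granting that both complements of $\Sigma$ and of $\Sigma'$ are handlebodies with matching boundary graphs and genus, producing a single surface diffeomorphism $\Sigma\to\Sigma'$ that extends over \emph{both} sides simultaneously is precisely the uniqueness problem for Heegaard splittings; it cannot be dismissed as ``flexibility.'' The paper resolves this by quoting Frohman's theorem, which says that two splittings (with the $\pi_1$-surjectivity condition) of the same genus and isomorphic boundary graph are related by a homeomorphism of $M$ --- this also disposes of your worry about how the genus is distributed relative to the boundary circles --- and then does genuine work to upgrade the homeomorphism to a diffeomorphism (Munkres smoothing on $\Sigma\cup\partial M$, Edwards--Kirby, Cerf) and to an isotopy (contractibility of $\mathrm{Diff}(B^3,\partial)$ by Hatcher, with the orientation-reversing case handled by choosing a reflection-symmetric model surface). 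As written, your proposal would not compile into a proof without supplying both of these ingredients.
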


In the case when $\Sigma$ and $\Sigma'$ have connected boundary or when they are topological annuli, \cref{thm:freeboundary} follows from \cite{Meeks1981}*{Corollary 4}. There, Meeks proves that all minimal surfaces in $B^3$ with connected boundary are unknotted, and that all minimal surfaces in $B^3$ diffeomorphic to an annulus are isotopic to the critical catenoid.
More in general, a notion of unknottedness of free boundary minimal surfaces in this setting was already proven in \cite{ChoeFraser2018}*{Corollary~3.3}. However, Choe--Fraser's result does not contain a discussion on the boundary graphs.
Part of the proof is similar, but we decided to add the entire proof for completeness, together with the discussion on the boundary graphs.

Note that the free boundary condition in \cref{thm:freeboundary} is necessary. Hall in \cite{Hall1984} showed that, if $\Sigma$ and $\Sigma'$ are two minimal surfaces of the same genus in the ball $B^3\subset\R^3$ with the \emph{same boundary}, they are not necessarily isotopic. In particular, he constructs $\Sigma, \Sigma'$ with genus zero and equal boundary where $\Sigma$ is unknotted and $\Sigma'$ is knotted.
The key feature of free boundary minimal surfaces that enables their unknottedness is the Frankel property, as discussed in \cref{sec:KeyIdeas}. Minimal surfaces in $B^3$ without the free boundary condition do not satisfy the Frankel property, which gives a heuristic reason for Hall's examples to exist.

We conclude the discussion about the free boundary setting with an open question, which naturally arises in view of \cref{thm:freeboundary}.
\begin{question*}
Which finite graphs can be realized as boundary graphs of free boundary minimal surfaces in the unit ball $B^3\subset\R^3$ (or more in general in a three-dimensional manifold with nonnegative Ricci curvature and strictly convex boundary)? How does the answer change if we fix the genus of the surface? 
\end{question*}

Recall that Karpukhin--Kusner--McGrath--Stern proved in \cite{KarpukhinKusnerMcGrathStern2024}*{Theorem~1.2} that, for every natural numbers $g\ge 0$, $b\ge 1$, there exists a free boundary minimal surface in the unit ball $B^3$ with genus $g$, $b$ boundary components, and whose boundary graph is a star. Many other examples with different boundary graphs have also been constructed (see the introduction of \cite{FranzKetoverSchulz2024} for a list of recent results).
However, we do not yet have a complete answer to the open question above.
What we can say so far is that boundary graphs of surfaces in this setting have to be trees (see \cref{rmk:trees}).
Moreover, in the special case of a genus zero surface in $B^3$, only very certain graphs can be attained.
Indeed, recall that genus zero free boundary minimal surfaces are radial graphs by \cite{KusnerMcGrath2024}*{Corollary~4.2} (see also \cite{McGrathZou2023}*{Theorem~2.1}. This implies that $\Graph(\partial\Sigma,\partial B^3)$ must be a star.

\subsection{Self-shrinkers}

In this section, our ambient manifold is $(\R^3, e^{-\abs{x}^2/4}g_{\R^2})$. As mentioned above, a minimal surface $\Sigma$ in this space is a self-shrinker. Inspired by the case of free boundary minimal surfaces, we can define the graph at infinity $\Graph(\partial\Sigma,\infty)$ of a self-shrinker $\Sigma$ as $\Graph(\Sigma\cap \partial B_R(0),\partial B_R(0))$ for any radius $R>0$ sufficiently large. Thanks to the structure theorem \cite{Wang2016}*{Theorem~1.1}, this notion of graph at infinity is well-defined (see \cref{sec:selfshrinkers} for more details).

In analogy to the free boundary case, we can prove the following result.

\begin{restatable}{theorem}{selfshrinkers}\label{thm:selfshrinkers}
    Let $\Sigma,\ \Sigma'$ be (smooth, complete, properly embedded) self-shrinkers in $\R^3$ with finite topological type. Assume that $\Sigma$ and $\Sigma'$ have the same genus and that their graphs at infinity $\Graph(\partial\Sigma,\infty)$ and $\Graph(\partial\Sigma',\infty)$ are isomorphic. Then, $\Sigma$ and $\Sigma'$ are smoothly isotopic in $\R^3$.   
\end{restatable}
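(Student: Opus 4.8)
The plan is to follow the strategy of the proof of \cref{thm:freeboundary}, adapted to the noncompact setting. That argument has two ingredients: a geometric one---the minimal surface separates the ambient manifold into two handlebodies---and a topological one---two such handlebody decompositions with the same genus and isomorphic graph are equivalent. Here one must first tame the self-shrinker near infinity. By \cite{Wang2016}*{Theorem~1.1} (cf.~\cref{sec:selfshrinkers}), there is $R_0>0$ such that for every $R\ge R_0$ the set $\Sigma\setminus B_R(0)$ is a finite union of ends, each a small normal graph over the complement of a ball in a cone or a cylinder; in particular $\Sigma$ and $\Sigma'$ meet $\partial B_R(0)$ transversally in circles, and $\Graph(\Sigma\cap\partial B_R(0),\partial B_R(0))$ stabilizes to $\Graph(\partial\Sigma,\infty)$ as $R\to\infty$. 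Using the isomorphism $\Graph(\partial\Sigma,\infty)\cong\Graph(\partial\Sigma',\infty)$ together with this graphical description of the ends (which shows in particular that the ends are unknotted), an ambient isotopy of $\R^3$ reduces us to the case in which $\Sigma$ and $\Sigma'$ meet $\partial B_R(0)$ in the same circles $\Gamma$ and coincide outside $B_R(0)$ with one and the same standard configuration of model ends. It then suffices to show that the compact surfaces $\Sigma\cap\overline{B_R(0)}$ and $\Sigma'\cap\overline{B_R(0)}$, which now share the boundary $\Gamma\subset\partial B_R(0)$ and have the same genus, are smoothly isotopic rel $\Gamma$ in $\overline{B_R(0)}$.

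For the geometric ingredient, one uses that a properly embedded self-shrinker is a minimal surface for the Gaussian metric $e^{-\abs{x}^2/4}g_{\R^3}$, equivalently a weighted minimal surface for the weight $e^{-\abs{x}^2/4}$; the associated Bakry--Émery Ricci curvature $\Ric+\Hess(\abs{x}^2/4)=\tfrac12 g_{\R^3}$ is positive, playing the role of the hypothesis $\Ric\ge 0$ in \cref{thm:freeboundary}. Following \cite{ChoeFraser2018} (and, ultimately, the argument of Lawson \cite{Lawson1970unknot}) one shows, in order: that $\Sigma$ is connected---two properly embedded self-shrinkers must intersect, by a Frankel-type argument---and hence, being two-sided in the simply connected $\R^3$, separates it into two components $\Omega_\pm$; that $\Sigma$ is incompressible in each $\Omega_\pm$, so that $\Graph(\partial\Sigma,\infty)$ is a tree (as in \cref{rmk:trees}); and finally that $\Omega_\pm\cap\overline{B_R(0)}$ is a handlebody. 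Incompressibility would be obtained by producing a compressing disk that is minimal for the Gaussian metric and reaching a contradiction via the maximum principle. I expect the main difficulty to lie exactly here: unlike in \cref{thm:freeboundary} there is no compact ambient with strictly convex boundary in which to run the minimization, and the Gaussian metric is incomplete at infinity (to which it collapses), so large coordinate spheres fail to be convex barriers on the side one would want; one must instead work inside the truncated ball $\overline{B_R(0)}$ and use Wang's structure theorem, both to justify the truncation and to keep minimizing sequences from escaping toward infinity.

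For the topological ingredient, compactify $\R^3$ to the closed ball $\overline{B^3}$ (for instance via $x\mapsto x/(1+\abs{x})$), under which $\Sigma$ and $\Sigma'$ close up to compact surfaces $\overline{\Sigma},\overline{\Sigma'}$ properly embedded in $\overline{B^3}$, with boundary on $S^2$ and with handlebody complementary pieces. The remaining problem is purely topological, and I would treat it by the methods of Frohman \cite{Frohman1992} (compare \cite{MeeksYau1992}): the graph isomorphism, the fact that the graphs are trees, and the common genus together pin down the arrangement of the boundary circles on $S^2$ up to a homeomorphism of $\overline{B^3}$, so that we may assume $\partial\overline{\Sigma}=\partial\overline{\Sigma'}$; doubling $\overline{B^3}$ along $S^2$ turns the two handlebody decompositions into Heegaard splittings of $S^3$ of the same genus, which are isotopic by Waldhausen's theorem; and an equivariant version of that isotopy, arranged to fix a neighborhood of $S^2$, restricts to an ambient isotopy of $\overline{B^3}$ taking $\overline{\Sigma}$ to $\overline{\Sigma'}$. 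Composed with the near-infinity isotopy of the first step, this yields the desired smooth isotopy of $\Sigma$ to $\Sigma'$ in $\R^3$.
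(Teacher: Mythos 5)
Your overall skeleton---truncate the self-shrinker to a large coordinate ball using Wang's structure theorem, prove the truncated surface gives a handlebody (Heegaard) decomposition of the ball, unknot topologically, and reassemble---is the same as the paper's. The essential divergences, and the places where your plan has genuine gaps, are the following.

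\textbf{The geometric ingredient.} You correctly flag that the Frankel-type step is the crux, and correctly note that the obstacle is that the Gaussian metric degenerates at infinity and coordinate spheres $\partial B_R(0)$ are not convex barriers for the Gaussian metric from the relevant side. But your plan stops at identifying the obstacle and does not close it, and the plan you sketch (``producing a compressing disk that is minimal for the Gaussian metric and reaching a contradiction via the maximum principle'') would not close it. What the paper does, following Naff--Zhu \cite{NaffZhu2024}*{Section~6.2 and Proposition~25}, is to pass to the universal Riemannian cover $\tilde C_1$ of one complementary region $C_1\subset B_R(0)$ and make the conformal change $\tilde g_1 = e^{\tilde f}\tilde g_0$ with $e^{\tilde f}=\tilde\rho^{-2}$, where $\tilde\rho$ is (the lift of) $R^2-\abs{x}^2$. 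This sends the spherical boundary piece $\Gamma = \Pi^{-1}(\partial B_R\cap\partial C_1)$ to infinity, so there is no free boundary to control, and for $R\ge 2\sqrt{2}$ the modified weighted manifold has nonnegative $(-2)$-Bakry--\'Emery Ricci curvature. Naff--Zhu's Frankel statement then applies to two distinct lifts $\Sigma_1,\Sigma_2$ and forces rigidity, which is ruled out since the only planar self-shrinker is the one through the origin. No minimizing disk is needed, and the problem of the missing barrier is resolved by the conformal trick, not by restricting the minimization.

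\textbf{Incompressibility is the wrong condition.} You write that one should show ``$\Sigma$ is incompressible in each $\Omega_\pm$'' en route to the handlebody conclusion. Incompressibility means injectivity of $\pi_1(\Sigma)\hookrightarrow\pi_1(\Omega_\pm)$, which is essentially the opposite of what is needed: a genus $\ge 1$ Heegaard surface is always compressible into each handlebody side. The condition that Frohman's notion of Heegaard splitting (\cref{def:HeegaardSplitting}) requires, and the one the Frankel argument actually delivers via connectedness of $\Pi^{-1}(\Sigma)$ (\cref{rmk:HeegaardUniversalCover}), is \emph{surjectivity} of $\pi_1(\Sigma)\hookrightarrow\pi_1(C_i)$, together with irreducibility (automatic here by \cref{lem:HeegaardIrreducibility}). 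Also, the tree property of the graph has nothing to do with incompressibility; it follows simply from the ambient being a ball, since every component of $\partial\Sigma$ separates $S^2$ (\cref{rmk:trees}).

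\textbf{The topological endgame and the rel-boundary issue.} You aim to normalize $\Sigma$ and $\Sigma'$ to agree outside $B_R(0)$ and then prove an isotopy of $\Sigma\cap\overline{B_R}$ to $\Sigma'\cap\overline{B_R}$ \emph{rel} $\Gamma$. That is strictly harder than what is needed, and harder than what Frohman's theorem or a Waldhausen-doubling argument readily gives, since both produce isotopies of the ball that may move $S^2$ and $\Gamma$. The paper sidesteps this: it applies \cref{thm:HeegaardImpliesIsotopic} to get an ambient isotopy $H_0$ of $\overline{B_R(0)}$ (not rel $\partial$), extends $H_0$ to all of $\R^3$ by radial rescaling, and conjugates by the two near-infinity isotopies $H_1, H_2$ (which straighten the ends of $\Sigma$, $\Sigma'$ to cones while fixing $B_R(0)$). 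This is cleaner and avoids any rel-boundary claim. Finally, your proposed doubling-along-$S^2$-and-apply-Waldhausen route is a plausible alternative to the paper's use of Frohman's \cite{Frohman1992}*{Theorem~2.1} plus the smoothing chain (Munkres, Edwards--Kirby, Cerf, Hatcher/Bamler--Kleiner), but as written you would still need to make the resulting isotopy respect $S^2$ and then smooth it, which are exactly the nontrivial parts that the paper's smoothing chain handles.
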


Note that unknottedness of compact self-shrinkers and self-shrinkers with one or two asymptotically conical ends was proven in \cite{MramorWang2020}*{Theorem~1.1}, \cite{Mramor2024}*{Theorem~1.1}, \cite{Mramor2020}*{Corollary~1.2}.
Here, we generalize these results to any (finite) number of ends.

Similarly to the free boundary case, one can wonder which graphs can arise as graphs at infinity of self-shrinkers.
\begin{question*}
Which graphs can be realized as graphs at infinity of self-shrinkers in $\R^3$? How does the answer change if we fix the genus of the surface?
\end{question*}

Again, very little is known in this direction apart from the genus zero case. In fact, the only genus zero self-shrinkers are the plane, the self-shrinking sphere and the self-shrinking cylinder by \cite{Brendle2016}*{Theorem~2}, for which the graph at infinity has two, one and three vertices, respectively.
This suggests that the self-shrinkers case is probably more rigid than the free boundary one. We refer to the introduction of \cite{Ketover2024} for a discussion on  the known examples of self-shrinkers in $\R^3$.

\subsection{Key ideas of the proofs} \label{sec:KeyIdeas}
Unknottedness of (free boundary) minimal surfaces in an ambient manifold $(M^3,g)$ is closely related to the validity of the Frankel property for minimal surfaces in such manifold.
Indeed, thanks to \cite{Frohman1992}*{Theorem~2.1}, the key step to prove \cref{thm:freeboundary} and \cref{thm:selfshrinkers} is to show that a surface $\Sigma$ as in the statements is a \emph{strong} Heegaard splitting of $M$ in the sense of \cite{Frohman1992}*{Section~1} (here \emph{strong} is used to distinguish this from the standard notion of Heegaard splitting, considered for example in \cite{Meeks1981}). 
Namely, the induced maps $\pi_1(\Sigma)\to\pi_1(C_i)$ are surjective for $i=1,2$, where $C_1$, $C_2$ are the two connected components of $M\setminus\Sigma$. 
It turns out that this is equivalent to proving a Frankel property in the universal Riemannian cover $\tilde C_i$ of $C_i$ for $i=1,2$, which has similar geometric properties as the ambient manifold $M$. 

The Frankel property for free boundary minimal surfaces in a three-dimensional manifold with nonnegative Ricci curvature and strictly convex boundary was proven by Fraser--Li in \cite{FraserLi2014}*{Lemma~2.4}. 
For self-shrinkers in $\R^3$, the Frankel property was proven in increasing generality in \cite{ImperaPigolaRimoldi2021}, \cite{ColdingMinicozzi2023}, and \cite{NaffZhu2024}.
Here we make use of the robust and general statement \cite{NaffZhu2024}*{Proposition~25} to prove the adaptations needed in our setting.
The free boundary setting is treated in \cref{sec:fbms}, while the self-shrinker case is discussed in \cref{sec:selfshrinkers}.
In \cref{sec:prelim}, we prove some topological preliminaries related to \cite{Frohman1992} about strong Heegaard splittings, boundary graphs, and unknottedness.

\subsection*{Acknowledgements} 
We would like to thank Ailana Fraser and Ursula Hamenst\"adt for their interest and support.
Moreover, we would like to thank Keaton Naff for interesting discussions on the Frankel property and on self-shrinkers, Mario Schulz for useful comments on a previous version of the paper, Shengwen Wang for introducing us to the problem and pointing out reference \cite{Meeks1981}, and Jonathan Zung for answering our topological questions. We also thank Robert Kusner and Alex Mramor for their interest on the paper and their useful comments.
Finally, we thank the anonymous referees for the helpful feedback.

G.\,F.~was partially supported by NSF grant DMS-2405361. 
Moreover, part of this work was performed while G.\,F. was in residence at the Simons Laufer Mathematical Sciences Institute (formerly MSRI) during the Fall 2024 semester, supported by NSF grant DMS-1928930.

\section{Topological preliminaries}\label{sec:prelim}

In this section, we collect the topological results about unknottedness needed in the paper.
In particular, we consider a three-dimensional differentiable manifold $M^3$ with boundary. We assume that $M$ is diffeomorphic to the closed unit ball $B^3\subset\R^3$ and we say that $M$ is a \emph{differentiable ball}. We will apply the results in this section to compact Riemannian manifolds with nonnegative Ricci curvature and strictly convex boundary (diffeomorphic to $B^3$ by \cite{FraserLi2014}*{Theorem~2.11}), or to Euclidean balls $B_R(0)\subset\R^3$.
Moreover, unless otherwise stated, we assume that $\Sigma\subset \amb$ is a smooth, compact, connected surface with boundary, properly embedded in $\amb$.

We now start by defining the notion of boundary graph, we then proceed discussing Heegaard splittings and unknottedness.

\begin{definition}[cf. \cite{Frohman1992}*{Section~2}]\label{def:boundary-graph}
    Let $C=\{\gamma_1, \gamma_2, \ldots, \gamma_n\}$ be a set of continuous disjoint simple closed curves lying on a smooth surface $S$. Define the \emph{boundary graph} $\Graph(C,S)$ as follows. The vertices of $\Graph(C,S)$ correspond to the connected components of $S\setminus C$. For every curve $\gamma_i\in C$, an edge connects the vertices corresponding to the connected components that contain $\gamma_i$ in their closure.
\end{definition}
\begin{remark}
Note that the number of edges in $\Graph(C,S)$ is equal to the number of curves in $C$. In particular, it is possible that there are two edges between the same pair of vertices or one edge connecting a vertex to itself. Indeed, if we take the surface $S$ to be a torus and $\gamma_1,\gamma_2$ to be two homotopically nontrivial disjoint curves then:
\begin{itemize}
\item $\Graph(\{\gamma_1,\gamma_2\},S)$ has two vertices and two edges connecting the two vertices;
\item $\Graph(\{\gamma_1\},S)$ has one vertex and one edge connecting the vertex to itself.
\end{itemize}
\end{remark}

\begin{remark} \label{rmk:trees}
Let $M^3$ be a three-dimensional differentiable ball and let $\Sigma\subset \amb$ be a properly embedded surface with $\partial\Sigma=\Sigma\cap\partial \amb$. Then every boundary component of $\partial\Sigma$ divides $\partial \amb$ in two connected components. Therefore, the boundary graph $\Graph(\partial\Sigma,\partial \amb)$ is actually a tree. 

Note also that we can lower bound the number of isomorphism classes of trees with $n$ vertices by $\frac{n^{n-2}}{n!}$ (by Cayley's formula, the number of labelled trees on $n$ vertices is $n^{n-2}$, and there are $n!$ possible labelings), so the questions brought up in the introduction on the possible finite graphs that can be realized as boundary graphs or graphs at infinity is nontrivial.
\end{remark}

\begin{definition}[cf. \cite{Frohman1992}*{Section~1}] \label{def:HeegaardSplitting}
    We say that a (smooth, compact, connected, properly embedded) surface $\Sigma$ in a compact three-dimensional differentiable manifold $M^3$ is a \emph{strong Heegaard splitting} if $M\setminus \Sigma$ consists of two nonempty connected components $C_1$ and $C_2$ (i.e. $\overline{C_1}\cap \overline{C_2} =\Sigma$) such that $C_1,C_2$ are irreducible (i.e., every sphere embedded in $C_i$ bounds a ball), and the induced maps $\pi_1(\Sigma)\to\pi_1(C_i)$ are surjective for $i=1,2$.
\end{definition}

\begin{lemma} \label{lem:HeegaardIrreducibility}
Let $M^3$ be a three-dimensional differentiable ball and let $\Sigma$ be a (smooth, compact, connected, properly embedded) surface in $M$. Then $\Sigma$ is a strong Heegaard splitting of $M$ if and only if the induced maps $\pi_1(\Sigma)\to\pi_1(C_i)$ are surjective for $i=1,2$.
\end{lemma}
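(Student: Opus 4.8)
The plan is to prove the two implications separately, with essentially all of the content in the backward one. The forward implication is immediate from \cref{def:HeegaardSplitting}: being a Heegaard splitting already includes the surjectivity of $\pi_1(\Sigma)\hookrightarrow\pi_1(C_i)$ for $i=1,2$. For the converse, suppose $M\setminus\Sigma$ has two connected components $C_1,C_2$ (which is what the hypothesis refers to) and that both inclusions $\pi_1(\Sigma)\hookrightarrow\pi_1(C_i)$ are surjective. To conclude that $\Sigma$ is a Heegaard splitting it then remains only to verify that $C_1$ and $C_2$ are irreducible; the $\pi_1$-surjectivity is simply carried along. I would also note in passing that the hypothesis that $M\setminus\Sigma$ has two components is automatic once $\Sigma$ is two-sided: since $M$ is a differentiable ball, $H_1(M;\Z/2)=0$, so the connected, properly embedded, two-sided surface $\Sigma$ separates $M$, and connectedness of $\Sigma$ then forces exactly two complementary components; if $\Sigma$ were one-sided, $M\setminus\Sigma$ would be connected and the statement would be vacuous.

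For the irreducibility of $C_1$, say, I would use that a differentiable ball is irreducible, which is the classical Alexander (smooth Schoenflies) theorem in dimension three and is the only place where the hypothesis that $M$ is diffeomorphic to $B^3$ is used. Let $S\subset C_1$ be a smoothly embedded $2$-sphere; after a small isotopy we may assume $S\subset\operatorname{int}(M)$. Since $\operatorname{int}(M)$ is diffeomorphic to $\R^3$, the sphere $S$ bounds a $3$-ball $B\subseteq\operatorname{int}(M)$, namely the bounded region it cuts off. Now $\partial B=S$ is disjoint from $\Sigma$, and $B\cap\partial M=\emptyset$ while $\partial\Sigma\subseteq\partial M$, so $\partial\Sigma\cap B=\emptyset$ as well; hence $\Sigma\cap B$ is open in $\Sigma$ (it meets only the interior of $B$) and closed in $\Sigma$ (since $B$ is closed), so it is either empty or all of $\Sigma$. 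It cannot be all of $\Sigma$, because $\partial\Sigma$ is nonempty and contained in $\partial M$, so $\Sigma$ cannot be contained in the interior ball $B$; therefore $\Sigma\cap B=\emptyset$. Thus $B$ is a connected subset of $M\setminus\Sigma$ containing $S\subset C_1$, so $B\subseteq C_1$, that is, $S$ bounds a $3$-ball in $C_1$. The identical argument applies to $C_2$, so both complementary regions are irreducible and $\Sigma$ is a Heegaard splitting.

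The two load-bearing facts are: irreducibility of the ambient differentiable ball, namely Alexander's theorem; and the point-set observation that the ball cut off by a sphere lying in $\operatorname{int}(M)$ must be disjoint from $\Sigma$, which relies on $\Sigma$ being connected with nonempty boundary on $\partial M$. Notice that $\pi_1$-surjectivity plays no role in establishing irreducibility, so the real content of the lemma is that over a differentiable ball the irreducibility clause in the definition of a Heegaard splitting comes for free. The one wrinkle worth spelling out is the reduction to a sphere in $\operatorname{int}(M)$ and the accompanying choice of $B$ as the bounded region that $S$ cuts off; this is routine but should be made explicit.
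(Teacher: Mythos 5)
Your proof is correct and takes essentially the same route as the paper: the only real content is the irreducibility of $C_1,C_2$, which both arguments deduce from irreducibility of the differentiable ball (Alexander/Schoenflies) together with connectedness of $\Sigma$ and the fact that its (nonempty) boundary lies on $\partial M$ — note that $\partial\Sigma\neq\emptyset$ is indeed available, being part of the standing assumptions of this section. The only cosmetic difference is that the paper quotes \cite{ChodoshKetoverMaximo2017}*{Lemma~C.1} for two-sidedness (hence the splitting into exactly two components), where you argue directly from $H_1(M;\Z/2)=0$.
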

\begin{proof}
Observe that, if $M$ is a differentiable ball, then $\Sigma$ is two-sided (by e.g. \cite{ChodoshKetoverMaximo2017}*{Lemma~C.1}) and thus it divides $M$ into two connected components $C_1$ and $C_2$. Moreover, $C_1$ and $C_2$ are irreducible: if $C_1$ was reducible, then there would exist some sphere in $C_1$ whose inside enclosed a portion of $C_2$ (because $M$ is irreducible), but then this would mean there was a connected component of $\Sigma$ inside the sphere that was disjoint from the rest of the surface, which is a contradiction.
\end{proof}

\begin{remark} \label{rmk:HeegaardUniversalCover}
Fix $i=1$ or $i=2$, and let $\Pi\colon\Tilde{C_i}\to C_i$ be the universal cover of $C_i$. If $\Pi^{-1}(\Sigma)$ is connected, then $\pi_1(\Sigma)\to\pi_1(C_i)$ is surjective.
To see this, consider any loop in $C_i$ and lift it to a curve in $\Tilde{C_i}$ connecting two points (which have the same image via $\Pi$). As $\Tilde{C_i}$ is simply connected, this curve is homotopic to a curve in $\Pi^{-1}(\Sigma)$ connecting two points with the same $\Pi$-projection to $\Sigma$ (which exists because $\Pi^{-1}(\Sigma)$ is connected). Then we can project down to a loop on $\Sigma$, so we have surjectivity.
\end{remark}

Note that \cref{def:HeegaardSplitting} is not the classical definition of a Heegaard splitting, which usually does not require the induced maps of the fundamental groups to be surjective. In fact, all minimal surfaces in $M$ are classical Heegaard splittings, regardless of the free boundary condition (see e.g. \cite{Meeks1981}*{Proposition~2}.) Importantly, the $\pi_1$-surjectivity requirement implies that the surface is ``unknotted'' in some way, as we are about to prove.
Indeed, we show that if a surface is a strong Heegaard splitting then it is smoothly isotopic to a paradigmatic unknotted surface, which looks like a thickened unknotted graph and is rigorously defined as follows.

\begin{definition}[Model surface] \label{def:ModelSurface}
Given a tree $T$ and a number $g\in\mathbb{N}$, we define a \emph{model surface} $\Sigma$ in $B^3$ with boundary graph $T$ and genus $g$ as follows (see also \cref{fig:ModelSurface}).

\begin{figure}[htpb]
\centering
\begin{tikzpicture}
\node at (-1.8,0.8) {$g=2$};
\node at (-2,-0.8) {$T=$};
\coordinate (a) at (0,0);
\coordinate (b) at (-1,-1);
\coordinate (c) at (0.2,-1);
\coordinate (d) at (1.4,-1);

\coordinate (e) at (-1.5,-2);
\coordinate (f) at (-1,-2);
\coordinate (g) at (-0.5,-2);

\coordinate (h) at (0.2,-2);

\coordinate (i) at (1,-2);
\coordinate (j) at (1.8,-2);
\draw (a)--(b);
\draw (a)--(c);
\draw (a)--(d);

\draw (b)--(e);
\draw (b)--(f);
\draw (b)--(g);

\draw (c)--(h);

\draw (d)--(i);
\draw (d)--(j);
\foreach \x in {(a), (b), (c), (d), (e), (f), (g), (h), (i), (j)}{
        \fill \x circle[radius=2pt];
    }

\draw[gray!50!white,rounded corners] (-2.7, 1.3) rectangle (2.3,-2.4);

\tikzset{snakearrow/.style={
    decoration={snake,markings,mark=at position 1 with {\arrow[scale=2]{>}}},
    postaction={decorate},
    }
}
\draw[-{Latex[length=2mm,width=2mm]},decorate,decoration={snake,post length=3mm}] (2.8,-0.6) -- (4.5,-0.6);

\node[inner sep=0pt] (modsurf) at (7.8,-0.5){\includegraphics[width=.32\textwidth]{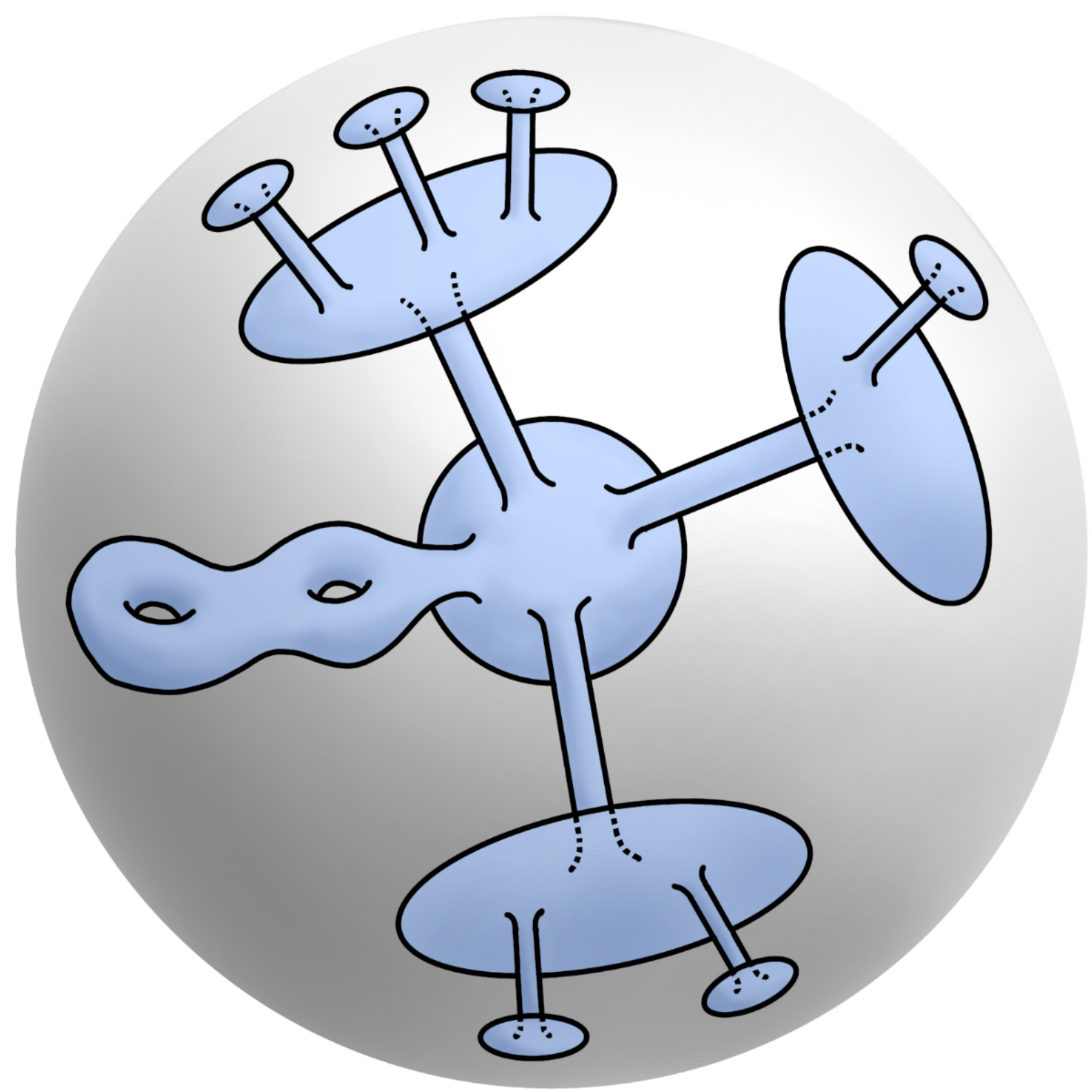}};
\end{tikzpicture}
\caption{Schematical representation of a model surface with prescribed genus and boundary graph.}
\label{fig:ModelSurface}
\end{figure}

Choose a node $v_0$ to be the root of the tree, and denote by $v_1,\ldots,v_k$ its direct children for some $k\in\N$.
Let $S$ be a sphere centered in the origin and with radius $1/8$. Moreover, take $k$ disjoint flat discs $D_1,\ldots,D_k\subset B^3$ with the same radius and with $\partial D_i = D_i\cap\partial B^3$ for $i=1,\ldots,k$.
Now, let $r_i$ be the radial segment connecting the center of $D_i$ with $S$ and let $C_i$ be a small tubular neighborhood of $r_i$ with $\partial C_i\subset D_i\cup S$. Assume that $C_1,\ldots,C_k$ have the same radius for $i=1,\ldots,k$.
Then, consider the surface $\Sigma_1$ obtained as the union of $S$, $D_1,\ldots,D_k$ and $C_1,\ldots,C_k$, after removing the small discs bounded by $\partial C_i$ on $D_i$ and $S$ for $i=1,\ldots,k$.

Now, consider the vertex $v_i$ for some $i=1,\ldots,k$. Let $k_i$ be the number of direct children of $v_i$, which are at distance $2$ from the root $v_0$. Consider $k_i$ disjoint flat discs in $B^3$ with boundary contained in the smallest of the components of $\partial B^3\setminus\partial D_i$.  Moreover, similarly as before, connected the center of each of these discs with $D_i$ with small cylindrical bridges. We choose the discs and the bridges of sufficiently small radius to ensure no self-intersections.
Repeat this procedure for each $i=1,\ldots,k$ and let $\Sigma_2$ be the surface obtained as the union of $\Sigma_1$ and all these new discs and cylindrical bridges.

We then repeat this procedure for the vertices in $T$ at distance $3$ from the root $v_0$, to get a surface $\Sigma_3$. And so on, we repeat the procedure until each nonroot vertex of $T$ corresponds to some disc connected with a cylindrical bridge to the disc corresponding to its mother in the tree. We let $\Sigma'$ be the surface that we obtain in this way.
Finally, let $\Sigma$ be the surface obtained by taking the connected sum of $\Sigma'$ with a standardly embedded genus $g$ surface. In particular, we can assume that the genus $g$ surface is attached to the sphere $S$ and it is small enough so that it does not intersect $\Sigma'$.
The surface $\Sigma$ is not smooth at the intersection between the discs and the bridges, but it can be smoothened out easily.
\end{definition}

\begin{proposition} \label{prop:ModelIsHeegaard}
The model surface in \cref{def:ModelSurface} is a strong Heegaard splitting.
\end{proposition}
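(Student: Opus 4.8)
The plan is to verify the two conditions in \cref{def:HeegaardSplitting} directly for the model surface $\Sigma$: that $B^3\setminus\Sigma$ has two connected components $C_1,C_2$, both irreducible, and that the inclusions $\pi_1(\Sigma)\hookrightarrow\pi_1(C_i)$ are surjective. By \cref{lem:HeegaardIrreducibility}, since $B^3$ is a differentiable ball, it actually suffices to check the surjectivity of the two inclusion maps; connectedness of the complement and irreducibility come for free. So I would begin by recalling \cref{lem:HeegaardIrreducibility} to reduce to the $\pi_1$-surjectivity statement, and then identify the two components $C_1$ and $C_2$ of $B^3\setminus\Sigma$ concretely from the construction: $C_1$ is the ``inside'' region (the component meeting the interior of the sphere $S$, the interiors of the cylindrical bridges, and the small caps cut off on the far side of each leaf disc), and $C_2$ is the ``outside'' region (the component adjacent to $\partial B^3$ between the boundary curves, together with the exterior of $S$ and the regions on the other side of each disc). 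It should be checked that these are indeed the only two components — this follows because $\Sigma$ is two-sided and connected, so it separates $B^3$ into exactly two pieces.

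Next I would argue surjectivity of $\pi_1(\Sigma)\hookrightarrow\pi_1(C_i)$ for each $i$. The cleanest route is to show that each $C_i$ deformation retracts onto a subcomplex built from pieces of $\Sigma$ together with contractible cells, so that every loop in $C_i$ is homotopic into $\Sigma$. Concretely: $C_1$, the inner region, is a handlebody — it deformation retracts onto a graph (the ``core'' consisting of the center of $S$, the core segments $r_i$ of the bridges, and the centers of the discs, unioned with the core circles of the attached genus-$g$ handles) — and each generator of $\pi_1(C_1)$ (namely the loops running through the handles of the genus-$g$ summand) is visibly realized by a loop already lying on $\Sigma$. Symmetrically, $C_2$ deformation retracts onto the region near $\partial B^3$; since $\partial B^3\setminus\partial\Sigma$ is a union of discs (the tree structure of the boundary graph guarantees this) and the complementary ``outside'' regions between $S$, the discs, and $\partial B^3$ are themselves balls with handles corresponding to the same genus-$g$ summand, $\pi_1(C_2)$ is again generated by loops that can be pushed onto $\Sigma$. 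In both cases one uses that $\Sigma' $ (the genus-zero thickened-tree part) bounds on both sides only balls, so it contributes nothing to $\pi_1(C_i)$, and all the fundamental group comes from the genus-$g$ connected summand, whose generating loops lie on $\Sigma$ by construction.

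The main obstacle I expect is purely bookkeeping: carefully decomposing $B^3\setminus\Sigma$ into the two pieces and exhibiting the deformation retractions, given the inductive disc-and-bridge construction of \cref{def:ModelSurface}. One has to be a little careful that the nested discs (children inside the small component of $\partial B^3\setminus\partial D_i$, etc.) really do produce only two complementary regions and that each region is a handlebody; this is intuitively clear from \cref{fig:ModelSurface} but needs an induction on the distance from the root in the tree $T$. I would organize this as a lemma: at each stage $\Sigma_j$, the complement $B^3\setminus\Sigma_j$ has two components, each a handlebody, and attaching the next layer of discs and bridges (which is a trivial $1$-handle attachment along an unknotted, unlinked configuration) preserves this. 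Once the two-handlebody structure is established at the final stage $\Sigma'$, taking the connected sum with a standard genus-$g$ surface near $S$ only adds $g$ handles to one side and $g$ handles to the other, with generating loops lying on $\Sigma$; so the inclusions remain $\pi_1$-surjective, completing the proof.
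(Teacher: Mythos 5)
Your overall architecture is sound (reduce via \cref{lem:HeegaardIrreducibility} to $\pi_1$-surjectivity, then exhibit generators of $\pi_1(C_i)$ that live on $\Sigma$), but there is a genuine error at the crucial step: you assert that the genus-zero thickened-tree part $\Sigma'$ ``bounds on both sides only balls, so it contributes nothing to $\pi_1(C_i)$,'' and consequently that all of $\pi_1(C_i)$ comes from the genus-$g$ connected summand. This is false once the tree has more than one edge. For a surface $\Sigma$ of genus $g$ with $b$ boundary circles separating $B^3$ into handlebodies $C_1, C_2$ of genera $h_1, h_2$, one has $h_1 + h_2 = 2g + b - 1$; so already at $g = 0$, $b = 2$ (boundary graph a path on three vertices, the critical-catenoid configuration) one of the two sides is a solid torus, not a ball. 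Concretely, in that example $C_1$ is $(\text{inside }S)\cup(\text{solid bridge }T_1)\cup(\text{lens behind }D_1\text{ minus the second bridge and the region past }D_{1,1})$, and removing the arc-like ``bridge $+$ cap'' from the lens drills a genuine handle. The same error recurs in your description of $C_2$: the components of $\partial B^3\setminus\partial\Sigma$ are not all discs (a nonleaf, nonroot vertex corresponds to a planar region with several boundary circles), so the ``outside'' regions are not balls either.

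The part of $\pi_1(C_i)$ you are missing is exactly what the paper handles: meridian loops winding around each cylindrical bridge generate the ``tree'' contribution, and these loops are visibly isotopic onto the cylinders, hence into $\Sigma$. Your inductive handlebody lemma is a reasonable way to organize the verification that $C_1, C_2$ are handlebodies (each new disc-and-bridge layer is an unknotted, unlinked $1$-handle attachment), but the induction as you've stated it proves the wrong thing — it must also track a generating set of $\pi_1(C_i)$ and check that each new generator (the meridian of the new bridge) lies on $\Sigma$. As written, your proposal only accounts for the loops through the genus-$g$ handles and silently assumes the rest of $\pi_1(C_i)$ is trivial, which is where the argument fails.
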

\begin{proof}
By \cref{lem:HeegaardIrreducibility}, it suffices to show that if $\Sigma\subset B^3$ is a model surface of genus $g$ with boundary graph a tree $T$, then $\pi_1(\Sigma)\to\pi_1(C_i)$ is surjective for $i=1,2$, where $C_1$ and $C_2$ are the two connected components of $B^3\setminus\Sigma$. 

Now, take two points $p_1\in C_1$ and $p_2\in C_2$ such that $p_1,p_2$ are contained in a small tubular neighborhood of the sphere $S$. 
For every cylindrical bridge in $\Sigma$, consider a simple loop winding around the bridge and contained in $B^3\setminus \Sigma$. Note that each loop is either contained in $C_1$ or $C_2$. 
Then, we can connect each loop with a simple curve to $p_1$ if the loop is in $C_1$, or to $p_2$ if the loop is in $C_2$.
Let us denote by $\rho_1\subset C_1$ and $\rho_2 \subset C_2$ the two graphs obtained in this way.
Finally, add to $\rho_1$ one simple loop in $C_1$ (based in $p_1$) winding around each hole of the genus $g$ surface, and do the same with $\rho_2$.

Then, $C_1$ and $C_2$ can be retracted to the graphs $\rho_1$ and $\rho_2$, respectively. Therefore, the loops in the constructions of $\rho_1$ and $\rho_2$ are generators of the fundamental group of $C_1$ and $C_2$, respectively. 
Moreover, note that every loop in $\rho_1$ and $\rho_2$ can be isotoped to a loop in $\Sigma$. As a result, $\pi_1(\Sigma)\to\pi_1(C_i)$ is surjective for $i=1,2$. 
\end{proof}

\begin{theorem} \label{thm:HeegaardImpliesIsotopic}
Let $\Sigma$ be a (smooth, compact, connected, properly embedded) strong Heegaard splitting in a three-dimensional differentiable ball $M^3$. Then $\Sigma$ is \emph{unknotted}, meaning that it is smoothly isotopic in $M$ to the model surface from \cref{def:ModelSurface} with the same genus and the same boundary graph. 
In particular, every pair of strong Heegaard splittings with the same genus and the same boundary graph are smoothly isotopic.
\end{theorem}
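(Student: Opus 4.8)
The plan is to reduce the statement to a known uniqueness result for Heegaard splittings of the $3$-ball, after absorbing the boundary data and the genus into an auxiliary closed $3$-manifold. First I would reduce to the closed case: given the Heegaard splitting $\Sigma\subset M$ with complementary handlebodies $C_1,C_2$, double $M$ along its boundary $\partial M\cong S^2$ to obtain $S^3$. The boundary graph being a tree means every component of $\partial\Sigma$ is an unknotted circle on $S^2$ bounding a disc on each side; capping $\Sigma$ off with these discs inside the doubled copy (or, more carefully, gluing in the standard model surface on the other side along the prescribed boundary-sphere configuration) yields a closed surface $\widehat\Sigma$ in $S^3$. The key point is that the combinatorial data $(T,g)$ exactly prescribes, up to isotopy of $S^3$ fixing the boundary sphere setwise, how the capping is done — this is where \cref{rmk:trees} and the definition of the model surface do the work: two boundary configurations with isomorphic trees are related by an ambient isotopy of the $2$-sphere, which extends to $S^3$.

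Next I would verify that $\widehat\Sigma$ is a Heegaard surface of $S^3$ in the classical sense. Since $\Sigma$ is a Heegaard splitting of $M$, the $\pi_1$-surjectivity $\pi_1(\Sigma)\twoheadrightarrow\pi_1(C_i)$ together with irreducibility of $C_1,C_2$ implies, by a standard loop-theorem/Dehn-lemma argument (or by citing \cite{Frohman1992}), that $C_1$ and $C_2$ are handlebodies; the capping operation turns them into handlebodies in $S^3$ whose common boundary is $\widehat\Sigma$. By Waldhausen's theorem, any two Heegaard splittings of $S^3$ of the same genus are isotopic. Applying this to $\widehat\Sigma$ and to the closed surface $\widehat\Sigma_0$ obtained by the same capping construction from the model surface of \cref{def:ModelSurface} (which is a Heegaard splitting by \cref{prop:ModelIsHeegaard}), we get an ambient isotopy of $S^3$ taking one to the other. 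The final step is to arrange this isotopy to respect the boundary: since both surfaces meet the capping region in the standard model configuration dictated by $(T,g)$, one can first isotope so that the two surfaces agree near $\partial M$ and on the doubled piece, then run Waldhausen's argument relative to that region (or invoke the uniqueness of Heegaard splittings of a handlebody rel boundary), and restrict the resulting isotopy to $M$. This gives the isotopy from $\Sigma$ to the model surface, and the ``in particular'' clause follows by composing two such isotopies.

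The main obstacle I anticipate is making the boundary bookkeeping rigorous: one must show that the tree isomorphism $\Graph(\partial\Sigma,\partial M)\cong\Graph(\partial\Sigma',\partial M)$ genuinely lifts to an ambient isotopy of $\partial M$ carrying $\partial\Sigma$ to $\partial\Sigma'$, and that this boundary isotopy extends compatibly with the interior picture so that Waldhausen can be applied rel boundary rather than merely in the closed doubled manifold. Concretely, this requires (i) a lemma that disjoint Jordan curves on $S^2$ with isomorphic boundary graph are ambient isotopic on $S^2$ — essentially because the configuration is a tree of nested discs — and (ii) checking that capping a Heegaard splitting of $M$ by standard discs produces a genuine Heegaard splitting of the doubled manifold $S^3$, with control on the gluing region. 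Both are topologically routine but need to be stated carefully; everything else is an invocation of Waldhausen's classification of Heegaard splittings of $S^3$ together with \cref{lem:HeegaardIrreducibility}, \cref{prop:ModelIsHeegaard}, and the structure of the model surface.
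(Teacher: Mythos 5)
There is a genuine gap, and it sits exactly where you park the difficulty as ``topologically routine'': the passage from Waldhausen's closed theorem back to a statement rel $\partial M$. Capping $\partial\Sigma$ with discs in the doubled ball destroys precisely the data the theorem is about: a genus~$g$ surface with $b$ boundary circles caps to a closed genus~$g$ surface, independently of $b$ and of the tree. Concretely, an equatorial disc and a critical-catenoid-type annulus in $B^3$ both cap to $2$-spheres in $S^3$, and Waldhausen declares the capped surfaces isotopic, even though the original surfaces are not isotopic in $B^3$ (different boundary graphs). So the closed-case uniqueness cannot distinguish the configurations you need to distinguish, and the ambient isotopy it produces has no reason to preserve the doubling sphere $\partial M$, the capping discs, or the curve system $\partial\Sigma\subset S^2$; it therefore does not restrict to an isotopy of $M$. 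The statement you would actually need --- uniqueness of splittings of the ball relative to a prescribed system of boundary curves with a given tree --- is not a formal consequence of the closed case: it \emph{is} the relative classification theorem of Frohman (\cite{Frohman1992}*{Theorem~2.1}), which is what the paper invokes directly to obtain a homeomorphism of $M$ carrying $\Sigma$ to the model surface. Your plan, read strictly, either reproves Frohman's theorem (with the hard relative step left unproved) or cites it under the name ``Waldhausen rel boundary,'' so the reduction buys nothing.

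A second, smaller omission: even granting a homeomorphism (or topological/PL ambient isotopy) of $M$ taking $\Sigma$ to the model, the theorem asks for a \emph{smooth} isotopy, and the paper spends most of its proof on exactly this upgrade --- smoothing $\tilde f$ along $\Sigma\cup\partial M$ via Munkres, extending with Edwards--Kirby and Cerf (which requires the Smale conjecture, i.e.\ Hatcher and Bamler--Kleiner), and then using contractibility of the group of diffeomorphisms of $B^3$ fixing the boundary together with $\pi_0\,\mathrm{Diff}(S^2)\cong\Z/2$ to conclude $f$ is isotopic to the identity or a reflection, the reflection being absorbed by choosing the model surface mirror-symmetric. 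None of this bookkeeping appears in your sketch, and the orientation-reversing ambiguity in particular cannot be waved away. Your item (i) --- that isomorphic trees of disjoint Jordan curves on $S^2$ are related by an ambient isotopy of $S^2$ --- is fine and is implicitly part of Frohman's setup, but it is the easy part; the relative uniqueness and the smoothing are where the theorem actually lives.
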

\begin{proof}
Let $\Sigma'$ be the model surface defined in \cref{def:ModelSurface} with the same genus and same boundary graph as $\Sigma$. By \cref{prop:ModelIsHeegaard}, $\Sigma'$ is a strong Heegaard splitting of $M$ too. Therefore, Frohman in \cite{Frohman1992}*{Theorem~2.1} showed that there exists a homeomorphism $\tilde f\colon M\to M$ such that $\tilde f(\Sigma)\to\Sigma'$.

As a first step, we now want to smooth out $\tilde f$ to get a diffeomorphism.
Define the surfaces $X=\Sigma\cup\partial M$ and $X' = \Sigma'\cup\partial M$.
Thanks to \cite{Munkres1956}*{Chapter~V}, $\tilde f|_X\colon X\to X'$ is isotopic to a diffeomorphism $h_X\colon X\to X'$. This can be seen by applying Munkres' smoothing procedure first to $\tilde f|_{\partial M}$, sending $\partial \Sigma$ to $\partial \Sigma'$, and then to $\tilde f|_{\Sigma}$.

By \cite{EdwardsKirby1971}*{Corollary~1.2}, $\tilde f$ is isotopic to a homeomorphism $h\colon M\to M$ that coincides with $h_X$ on $X$. Moreover, we can assume that $h$ is a diffeomorphism from a tubular neighborhood $N(X)$ of $X$ to a tubular neighborhood $N(X')$ of $X'$. This can be achieved by interpolating $h_X$ in the tubular neighborhood $N(X)$.

Finally, the map $h\colon M\to M$ is isotopic to a diffeomorphism $f\colon M\to M$, that coincides with $h$ on $N(X)$, thanks to \cite{Cerf1959}*{Theorem~5 and Corollary~1} (which assumes the Smale's conjecture, now proven in Hatcher \cite{Hatcher1983} and Bamler--Kleiner \cite{BamlerKleiner2023}).

We are left to prove that $\Sigma$ and $\Sigma'$ are smoothly isotopic. 
Note that, by \cite[Appendix~1]{Hatcher1983}, the group of diffeomorphisms of $B^3$ that fix the boundary $\partial B^3=S^2$ is contractible. Moreover, the group of diffeomorphisms of $S^2$ has two connected components corresponding respectively to the orientation preserving and orientation reversing diffeomorphisms (see \cite{Smale1959}).
As a result, the diffeomorphism $f$ is smoothly isotopic to either the identity or to a reflection across a plane.
This implies that the surface $\Sigma$ is smoothly isotopic to either $\Sigma'$ or the reflection of $\Sigma'$ across a plane.
However, note that it is possible to perform the construction of $\Sigma'$ in \cref{def:ModelSurface} in such a way that the model surface is symmetric with respect to the reflection across a plane (by choosing the discs to be centered on such plane). This concludes the proof.
\end{proof}

\section{Free boundary minimal surfaces are strong Heegaard splittings}\label{sec:fbms}

In this section, we prove the unknottedness of free boundary minimal surfaces stated in \cref{thm:freeboundary}.
Thanks to the topological preliminaries in the previous section, the theorem is a consequence of the following result.

\begin{theorem}[cf. \cite{ChoeFraser2018}*{Lemma~3.1 and~Theorem~3.2}] \label{thm:FBMSHeegaard}
Let $\Sigma$ be a (smooth, compact, properly embedded) free boundary minimal surface in a three-dimensional Riemannian manifold $M^3$ fulfilling the hypotheses of \cref{thm:freeboundary}. Then $\Sigma$ is a strong Heegaard splitting of $\amb$.
\end{theorem}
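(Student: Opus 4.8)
The plan is to use the topological reductions of the previous section to turn \cref{thm:FBMSHeegaard} into a Frankel-type non-existence statement for pairs of disjoint free boundary minimal surfaces in a covering space, and then to prove that statement by adapting the distance-minimizing geodesic argument of Fraser--Li \cite{FraserLi2014}*{Lemma~2.4}.

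First, since $M$ is a differentiable ball, \cref{lem:HeegaardIrreducibility} reduces the claim to showing that the inclusions $\pi_1(\Sigma)\hookrightarrow\pi_1(C_i)$ are surjective for $i=1,2$, where $C_1,C_2$ are the two components of $M\setminus\Sigma$. Two preliminary remarks: $\Sigma$ is automatically connected, since two disjoint free boundary minimal surfaces in $M$ would already contradict \cite{FraserLi2014}*{Lemma~2.4}; and we may assume $\partial\Sigma\neq\emptyset$, for otherwise $\Sigma$ is a closed minimal surface in the ball $M$, hence a sphere bounding a ball on each side, and the fundamental groups of both complementary components are trivial. Now fix $i$, let $\Pi\colon N\eqdef\widetilde{C_i}\to C_i$ be the universal Riemannian cover, and let $\Gamma$ be its deck transformation group; by \cref{rmk:HeegaardUniversalCover} it suffices to prove that $\Pi^{-1}(\Sigma)$ is connected.

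The point is that $N$ inherits all the geometric features used in the Frankel argument: the lifted metric is complete with $\Ric\geq0$, the action of $\Gamma$ on $N$ is cocompact (as $\overline{C_i}$ is compact), and $\partial N$ splits as the disjoint union of $\Pi^{-1}(\partial M\cap\overline{C_i})$, which is strictly convex, and the components of $\Pi^{-1}(\Sigma)$, each a properly embedded (possibly noncompact) minimal surface meeting $\Pi^{-1}(\partial M\cap\overline{C_i})$ orthogonally. Suppose $\Pi^{-1}(\Sigma)$ were disconnected, and pick two distinct components $\Sigma_1\neq\Sigma_2$. Because $\Pi^{-1}(\Sigma)$ is closed, $N$ is complete, and $\Gamma$ is cocompact (so closed bounded sets are compact and meet $\Pi^{-1}(\Sigma)$ in finitely many components), one checks first that $d\eqdef\dist_N(\Sigma_1,\Sigma_2)>0$; then, after translating a length-minimizing sequence of paths by elements of $\operatorname{Stab}(\Sigma_1)$ so that its initial points stay in a fixed compact fundamental domain of $\Sigma_1$ and passing to a subsequence, one obtains a unit-speed geodesic $\gamma\colon[0,d]\to N$ realizing $d$, meeting $\Sigma_1$ and some $\Gamma$-translate of $\Sigma_2$ orthogonally at its endpoints and disjoint from $\Pi^{-1}(\partial M\cap\overline{C_i})$, the latter because strict convexity repels minimizing geodesics.

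The final step is the second-variation computation along $\gamma$, exactly as in \cite{FraserLi2014}*{Lemma~2.4}: letting $e_1,e_2$ be parallel orthonormal fields along $\gamma$ spanning the normal bundle and varying $\gamma$ in the directions $e_1,e_2$ with endpoints constrained to the two surfaces, the sum of the two second variations of length equals $-\int_0^d\Ric(\gamma',\gamma')$ minus the traces of the second fundamental forms of the endpoint surfaces in the direction $\gamma'$, and the latter vanish since $\Sigma_1,\Sigma_2$ are minimal. Hence this sum is $\leq0$, and as $\gamma$ minimizes distance it must be $0$; this forces $\Ric(\gamma',\gamma')\equiv0$ along $\gamma$ and, by the standard rigidity, a flat totally geodesic product region $\Sigma_1\times[0,d]$ joining $\Sigma_1$ to the relevant translate of $\Sigma_2$. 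But then $\partial\Sigma_1\times[0,d]$ — nonempty since $\partial\Sigma\neq\emptyset$ — is foliated by ambient geodesic segments and lies inside $\Pi^{-1}(\partial M\cap\overline{C_i})$, contradicting strict convexity. Thus $\Pi^{-1}(\Sigma)$ is connected, $\pi_1(\Sigma)\hookrightarrow\pi_1(C_i)$ is surjective for $i=1,2$, and $\Sigma$ is a Heegaard splitting. I expect the genuine difficulty to be the noncompactness of $N$: showing that distinct components of $\Pi^{-1}(\Sigma)$ lie at positive distance and that this distance is attained requires the cocompactness bookkeeping above, and one must check that the equality case of the second variation really yields a product region reaching the strictly convex part of $\partial N$; the purely local ingredients are identical to Fraser--Li's and only need to be transplanted to the cover.
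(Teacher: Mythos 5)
Your proposal is correct and follows the same skeleton as the paper: reduce via \cref{lem:HeegaardIrreducibility} and \cref{rmk:HeegaardUniversalCover} to the connectedness of $\Pi^{-1}(\Sigma)$ in the universal Riemannian cover, and rule out disconnectedness by a Frankel-type distance argument, using cocompactness of the deck group to get attainment. The difference is in the key analytic step: the paper does not redo the second variation at all, but minimizes the sum $d_1+d_2$ over points (translated into a fixed ball $\mathcal B_0$ by deck transformations) and then quotes the general weighted Frankel statement of Naff--Zhu (\cite{NaffZhu2024}*{Proposition~25}, applied with $f=0$, $\kappa=0$, $1/\alpha=0$), whose rigidity conclusion---that $\tilde C_1$ would be a product $\Sigma_1\times[0,d]$---is immediately incompatible with the strict convexity of $\Gamma$. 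You instead transplant Fraser--Li's second-variation argument to the cover, which is more self-contained (and is closer to \cite{ChoeFraser2018}) but obliges you to handle the equality case of $\Ric\ge 0$ by hand; this is where your write-up is slightly imprecise: the claim that the side boundary $\partial\Sigma_1\times[0,d]$ of the product region ``lies inside'' $\Gamma$ is not immediate, and the cleaner way to close the argument (and what the strict convexity is really used for) is that after sliding, the distance-realizing normal geodesic emanating from a point $p\in\partial\Sigma_1$ is tangent to $\Gamma$ at $p$ by the free boundary condition, which strict convexity forbids---the same contradiction the paper extracts from the product rigidity. Two further small points: your translation by $\operatorname{Stab}(\Sigma_1)$ changes the target component to $g_m\Sigma_2$, so you should note (via the local single-sheet structure of the properly embedded lift near the limit endpoint) that $g_m\Sigma_2$ is eventually a fixed component at distance exactly $d$ from $\Sigma_1$, ensuring the limit geodesic is genuinely minimizing and hence stable; and your aside that a closed $\Sigma$ would be ``a sphere bounding a ball'' is unjustified as stated, though harmless, since closed minimal surfaces do not exist in such $M$ and the case does not arise.
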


\begin{proof}
Recall that $M$ is diffeomorphic to a three-dimensional ball by \cite{FraserLi2014}*{Theorem~2.11}.
Moreover, note that $\Sigma$ is connected by the Frankel property for free boundary minimal surfaces in $M$, proved in \cite{FraserLi2014}*{Lemma~2.4}. Therefore, by \cref{lem:HeegaardIrreducibility}, to show that $\Sigma$ is a strong Heegaard splitting we just need to prove that the induced maps $\pi_1(\Sigma)\to\pi_1(C_i)$ are surjective for $i=1,2$, where $C_1,C_2$ are defined as in \cref{def:HeegaardSplitting} since $\Sigma$ is a strong Heegaard splitting. Without loss of generality, inspired by \cite{Lawson1970unknot}, we let $i=1$ and consider the universal Riemannian cover $\Pi\colon \Tilde{C_1}\to C_1$. 
By \cref{rmk:HeegaardUniversalCover}, it is then sufficient to prove that $\Pi^{-1}(\Sigma)$ is connected.

Assume by contradiction that it is not, and denote by $\Sigma_1,\ldots,\Sigma_N$ the connected components of $\Pi^{-1}(\Sigma)$. Then $\Pi^{-1}(\partial{C_1})=\left(\cup_{k=1}^N \Sigma_k\right)\bigcup\Gamma$ for $\Gamma=\Pi^{-1}(\partial M\cap \partial C_1)$ (note that $\Gamma$ may be disconnected). Inspired by \cite{FraserLi2014}*{proof of Lemma~2.4}, we define $d_k(x)$ to be the distance from a point $x\in\Tilde{C_1}$ to $\Sigma_k$, and, for every $k\not=\ell=1,\ldots,N$, we consider 
\[
D\eqdef \inf_{k\not=\ell=1,\ldots,N}\inf_{x\in\tilde C_1} \big( d_k(x) + d_\ell(x) \big).
\]
Without loss of generality, we can assume that the first infimum is realized for $k=1$, $\ell=2$, as $k$ and $\ell$ varies in $1,\ldots,N$. Consider some minimizing sequence $(x_m)_{m\in\mathbb{N}}\subset \tilde C_1$ for $d_1+d_2$, namely $d_1(x_m)+d_2(x_m)\to \inf_{x\in\tilde C_1}(d_1(x)+d_2(x)) = D$ as $m\to\infty$. Fix some $\tilde{p}_0\in\tilde C_1$ and let
$$\mathcal{B}_0=\{\tilde{p}\in\tilde{C}_1:d(\tilde{p},\tilde{p}_0)\leq 2\diam(C_1)\}.$$ Then, for each $x_m$, there exists $x_m'\in\mathcal{B}_0$ such that $\Pi(x_m)=\Pi(x_m')$, i.e., there is an isometry of $\tilde C_1$ mapping $x_m$ to $x_m'$. 
Therefore, possibly renaming $x_m$ (to be $x_m'$), we can assume without loss of generality that the sequence $(x_m)_{m\in\N}$ is contained in $\mathcal{B}_0$. This ball is compact, so up to subsequence $(x_m)_{m\in\N}$ converges to some $x_0$ realizing the infimum, namely 
\[
d_1(x_0)+d_2(x_0) = \lim_{m\to\infty} \big(d_1(x_m)+d_2(x_m)\big) = \inf_{x\in\tilde C_1} \big(d_{1}(x)+d_2(x)\big) = D.
\]
Therefore, we can apply \cite{NaffZhu2024}*{Proposition~25} on $\tilde C_1$ with $N=\Gamma$, $f=0$, $\kappa=0$, $1/\alpha=0$ (note that $\tilde C_1$ has nonnegative Ricci curvature and $\Gamma$ is convex in $\tilde C_1$) and we obtain a contradiction. Indeed, observe that $\tilde C_1$ cannot be a product manifold $\Sigma_1\times[0,d]$ since $\Gamma$ is strictly convex in $\tilde C_1$.

So we have proved that $\Sigma_1, \Sigma_2$ cannot be disjoint, meaning that $\Pi^{-1}(\Sigma)$ is connected and $\pi_1(\Sigma)\to\pi_1(C_1)$ is surjective. Thus, this concludes the proof that $\Sigma$ is a strong Heegaard splitting.
\end{proof}

\begin{proof} [Proof of \cref{thm:freeboundary}]
The result follows from \cref{thm:FBMSHeegaard} together with \cref{thm:HeegaardImpliesIsotopic}, from the previous section.
\end{proof}

\section{Self-shrinkers are strong Heegaard splittings} \label{sec:selfshrinkers}

Let $\Sigma\subset\R^3$ be a self-shrinker of the mean curvature flow. Equivalently, $\Sigma$ is a minimal surface with respect to the Gaussian metric $e^{-\abs{x}^2/4}g_{\R^3}$ on $\R^3$.
Let us further assume that $\Sigma$ has finite topology. Then, by \cite{Wang2016}*{Theorem~1.1}, each end of $\Sigma$ is smoothly asymptotic to either a regular cone or a round cylinder. As a consequence, there exist $R_0>0$ such that for every $R\ge R_0$ the graphs $\Graph(\Sigma\cap \partial B_R(0),\partial B_R(0))$ are all isomorphic. 
Then, it makes sense to define the graph of $\Sigma$ at infinity, denoted $\Graph(\partial \Sigma,\infty)$, to be one of these isomorphic graphs.

As a result, to study the topology of a self-shrinker is sufficient to look at the portion of the self-shrinker inside a sufficiently large ball. 
In fact, we prove that the self-shrinker is a strong Heegaard splitting in every sufficiently large Euclidean ball.

\begin{theorem}
Let $\Sigma$ be a (smooth, complete, properly embedded) self-shrinker in $\R^3$ and take $M^3 = B_R(0)$ for some $R\ge2\sqrt{2}$. Then $\Sigma\cap M$ is a strong Heegaard splitting of $M$. 
\end{theorem}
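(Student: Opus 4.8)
The plan is to adapt the proof of \cref{thm:FBMSHeegaard} essentially line by line, working now in the conformal Gaussian geometry: we view $\R^3$ as the metric measure space $(\R^3, g_{\R^3}, e^{-\abs{x}^2/4}\de\vol)$, so that $\Sigma$ is $f$-minimal (weighted minimal) with $f=\abs{x}^2/4$ and the Bakry--Émery Ricci tensor equals $\Ric_f=\tfrac12 g_{\R^3}>0$. Since $M=B_R(0)$ is a differentiable ball, \cref{lem:HeegaardIrreducibility} reduces the statement to showing that $\Sigma\cap M$ is a smooth, compact, connected, properly embedded surface in $M$, and that $\pi_1(\Sigma\cap M)\hookrightarrow\pi_1(C_i)$ is surjective for $i=1,2$, where $C_1,C_2$ are the two components of $M\setminus(\Sigma\cap M)$.

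The first step is to pin down $\Sigma\cap M$ topologically. The shrinker $\Sigma$ is connected, since any two self-shrinkers in $\R^3$ meet (the Frankel property, \cite{ColdingMinicozzi2023},~\cite{NaffZhu2024}): the connected components of a disconnected $\Sigma$ would be disjoint self-shrinkers. To see that $\Sigma$ meets $\partial B_R$ transversally---so that $\Sigma\cap M$ is a compact surface with boundary $\Sigma\cap\partial B_R$, properly embedded in $M$---I would compare $\Sigma$ with the round spheres $\partial B_\rho$ using the self-shrinker equation $\vec H=-\tfrac12\sk{x,\nu}\nu$ together with the identity $\Delta_\Sigma\abs{x}^2=4-\sk{x,\nu}^2$: at a hypothetical interior or boundary touching point the two mean curvatures satisfy an inequality that the threshold $R\ge 2\sqrt2$ precludes, so $\Sigma$ is transverse to $\partial B_R$. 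A companion argument along the same lines shows that $\Sigma\setminus B_R$ retracts onto the ends of $\Sigma$, hence $\Sigma\cap M$ is connected. This leaves only the surjectivity of $\pi_1(\Sigma\cap M)\hookrightarrow\pi_1(C_i)$, which we handle symmetrically in $i$; fix $i=1$.

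Let $\Pi\colon\Tilde{C_1}\to C_1$ be the universal cover, with the pullback of the weighted structure (still $f$-minimal lifts, still $\Ric_f\ge 0$). By \cref{rmk:HeegaardUniversalCover} it suffices to prove that $\Pi^{-1}(\Sigma\cap M)$ is connected. Suppose not; let $\Sigma_1,\dots,\Sigma_N$ be its components and $\Gamma=\Pi^{-1}(\partial B_R\cap\partial C_1)$. Exactly as in the proof of \cref{thm:FBMSHeegaard}, set $d_k=\dist(\cdot,\Sigma_k)$ and $D=\inf_{k\ne\ell}\inf_{x\in\Tilde{C_1}}\bigl(d_k(x)+d_\ell(x)\bigr)$; since $\overline{C_1}$ is compact, a deck-transformation/diameter argument moves a minimizing sequence into a fixed compact ball, so the infimum is attained at some $x_0\in\Tilde{C_1}$. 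Then \cite{NaffZhu2024}*{Proposition~25}, applied on $\Tilde{C_1}$ in the weighted setting (nonnegative weighted Ricci, $\Gamma$ convex, $f$-minimal hypersurfaces) with the minimizer $x_0$ (interior, or on $\Gamma$), forces $\Tilde{C_1}$ to split as a product over $\Sigma_1$, contradicting the strict convexity of $\Gamma$ in $\Tilde{C_1}$---again using $R\ge 2\sqrt2$---just as in the free boundary case. Hence $\Pi^{-1}(\Sigma\cap M)$ is connected, $\pi_1(\Sigma\cap M)\hookrightarrow\pi_1(C_1)$ is surjective, and likewise for $i=2$; by \cref{lem:HeegaardIrreducibility}, $\Sigma\cap M$ is a Heegaard splitting of $M$.

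The bulk of this is a faithful transcription of the proof of \cref{thm:FBMSHeegaard}; the two genuinely new points, and the source of the constant $2\sqrt2$, are where I expect the difficulty to concentrate. First, one must make the comparison with the spheres $\partial B_\rho$ precise, i.e.\ establish that for $R\ge 2\sqrt2$ the shrinker $\Sigma$ is transverse to $\partial B_R$ and that $\Sigma\cap B_R$ is connected (equivalently, that $\Sigma\setminus B_R$ is a union of ends). Second, one must check that \cite{NaffZhu2024}*{Proposition~25} really is available on $\Tilde{C_1}$ in the weighted/self-shrinker setting with the hypotheses it requires: that the $f$-minimal lifts are admissible, that $\partial B_R$---hence $\Gamma$---is strictly convex in the sense needed for it to act as a barrier for self-shrinkers, and that the cocompactness used to locate $x_0$ holds. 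Once these are in place, the rest follows as in \cref{sec:fbms}.
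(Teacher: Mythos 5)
There is a genuine gap at the heart of your argument: you apply \cite{NaffZhu2024}*{Proposition~25} on $\Tilde{C_1}$ in the Gaussian weighted setting ($f=\abs{x}^2/4$, $\Ric_f=\tfrac12 g$) with $\Gamma=\Pi^{-1}(\partial B_R\cap\partial C_1)$ playing the role of a convex free boundary, ``just as in the free boundary case.'' But the free boundary case works because the lifted surfaces $\Sigma_k$ meet $\Gamma$ \emph{orthogonally}, which is exactly what makes the distance-function/second-variation argument (and the splitting alternative) valid when the minimizing configuration touches the boundary. A self-shrinker has no reason to meet $\partial B_R$ orthogonally: the pieces $\Sigma_k$ hit $\Gamma$ at arbitrary angles, so the free-boundary hypotheses of Proposition~25 are simply not satisfied, and the intended contradiction (``the product splitting contradicts strict convexity of $\Gamma$'') does not carry over. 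You flag this as a point ``to check,'' but it is not a checkable detail --- it is the step the direct approach cannot do, and it is precisely why the paper does something different. The actual proof follows \cite{NaffZhu2024}*{Section~6.2}: one introduces $\tilde\rho = R^2-\abs{\Pi(\cdot)}^2$, $\tilde\gamma=\abs{\Pi(\cdot)}^2/4$, and the conformal metric $\tilde g_1=\tilde\rho^{-2}\tilde g_0$ on $\Tilde{C_1}$, which sends $\Gamma$ to infinity so that the lifted surfaces become complete and Proposition~25 is applied with \emph{no} free boundary, for the weight $\tilde\gamma+\tilde f$ and $\alpha=-2$. The hypothesis $R\ge 2\sqrt2$ is exactly what makes $\widetilde{\Ric}^{-2}_{\tilde\gamma+\tilde f}\ge 0$ for this conformal weighted manifold --- not, as you guess, a convexity/transversality threshold for $\partial B_R$. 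The equality case is then ruled out not by convexity of $\Gamma$ but by observing that $\Sigma_1,\Sigma_2$ would be totally geodesic for $e^{-\tilde f}\tilde g_1=\Pi^*g_{\R^3}$, so by unique continuation $\Sigma$ would be a plane, and the self-shrinking plane through the origin has connected preimage.

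Two smaller points. First, the connectedness of $\Sigma\cap B_R$ for $R\ge 2\sqrt2$ does not need your sphere-comparison sketch (which, as written, is vague and would at best control touching points, not connectedness): the paper quotes it directly from the quantitative Frankel property of \cite{ColdingMinicozzi2023}*{Corollary~0.3} and \cite{NaffZhu2024}*{Theorem~3}. Second, your reduction via \cref{lem:HeegaardIrreducibility} and \cref{rmk:HeegaardUniversalCover}, and the deck-transformation compactness argument locating the minimizer $x_0$, do match the paper's proof of \cref{thm:FBMSHeegaard} and transfer correctly; the missing ingredient is solely the conformal change that eliminates the boundary before the splitting theorem is invoked.
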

\begin{proof}
First observe that $\Sigma\cap M$ is connected by the Frankel property for self-shrinkers, proved in \cite{ColdingMinicozzi2023}*{Corollary~0.3} and in \cite{NaffZhu2024}*{Theorem~3} with the explicit radius $2\sqrt{2}$.
Therefore, by \cref{lem:HeegaardIrreducibility}, we just need to prove that the induced maps $\pi_1(\Sigma)\to\pi_1(C_i)$ are surjective for $i=1,2$. 

Without loss of generality, let $i=1$ and consider the universal Riemannian cover $\Pi\colon (\Tilde{C_1},\tilde g_{0}) \to (C_1,g_{\R^3})$. By \cref{rmk:HeegaardUniversalCover}, $\pi_1(\Sigma)\to\pi_1(C_1)$ is surjective if and only if $\Pi^{-1}(\Sigma)$ is connected. 
Assume by contradiction that $\Pi^{-1}(\Sigma)$ is not connected, and denote by $\Sigma_1,\ldots,\Sigma_N$ the connected components of $\Pi^{-1}(\Sigma)$. We will now reach a contradiction by proving a Frankel property for the surfaces $\Sigma_1,\ldots,\Sigma_N$ on $\tilde C_1$, inspired by \cite{NaffZhu2024}*{Section~6.2 and Proposition~25}.

Consider the function $\tilde \rho\colon \Tilde{C_1}\to [0,\infty)$ given by $\tilde \rho(y) = R^2 - \abs{\Pi(y)}^2$, where we recall that $R$ is the radius of the ball $B_R(0)=M$. Moreover, let $\tilde \gamma\colon\tilde C_1\to[0,\infty)$ be defined as $\tilde\gamma(y) = \abs{\Pi(y)}^2/4$.
Let $\tilde f\colon \Tilde{C_1} \to (-\infty,\infty]$ be such that $e^{\tilde f} = \tilde\rho^{-2}$ and consider the metric $\tilde g_1 = e^{\tilde f}\tilde g_0$ on $\Tilde{C_1}$. Observe that $\tilde f = \infty$ on $\Gamma = \Pi^{-1}(\partial M \cap \partial C_1)$ (note that $\Pi^{-1}(\partial C_1) = (\cup_{k=1}^N\Sigma_k)\bigcup\Gamma$ and $\Gamma$ may be disconnected), so the idea to add this weight $\tilde f$ is to ``send the boundary $\Gamma$ at $\infty$''.

Note that what we are doing is to consider the same functions as in \cite{NaffZhu2024}*{Section~6.2} but lifted to the universal Riemannian cover of $\tilde C_1$. Indeed, with their notation of $\rho,\gamma, f$, we have that $\tilde\rho = \rho\circ \Pi$, $\tilde \gamma =\gamma\circ \Pi$ and $\tilde f = f\circ \Pi$.
In particular, as observed in \cite{NaffZhu2024}*{Section~6.2}, $\Sigma_1,\ldots,\Sigma_N$ are minimal surfaces in $(\tilde C_1,\tilde g_0)$ with respect to the weight $e^{-\tilde \gamma}$ (so we say that they are $\tilde\gamma$-minimal surfaces in $(\tilde C_1,\tilde g_0)$). Equivalently, $\Sigma_1,\ldots,\Sigma_N$ are minimal surfaces in $(\tilde C_1,\tilde g_1=e^{\tilde f}\tilde g_0)$ with respect to the weight $e^{-\tilde\gamma-\tilde f}$ (say $(\tilde\gamma+\tilde f)$-minimal surfaces in $(\tilde C_1,\tilde g_1)$).
Moreover, by the computation in \cite{NaffZhu2024}*{Section~6.2}, the manifold $(\tilde C_1,\tilde g_1)$ satisfies that the following $(-2)$-Bakry-Émery-Ricci curvature is nonnegative:
\[
\widetilde{\Ric}^{-2}_{\tilde \gamma+\tilde f} \ge 0,
\]
whenever $R\ge 2\sqrt{2}$.

Now, we define the function $d_k(x)$ to be the distance from a point $x\in\tilde C_1$ to $\Sigma_k$, for $k=1,\ldots,N$. Similarly to the proof of \cref{thm:FBMSHeegaard}, without loss of generality we get that
\[
\inf_{k\not=\ell =1,\ldots,N}\inf_{x\in\tilde C_1} \big(d_k(x) + d_\ell(x)\big)
\]
is achieved by $k=1,\ell=2$ and a point $x_0\in\tilde C_1\setminus\Gamma$.
As a result, we can apply \cite{NaffZhu2024}*{Proposition~25} on $(\tilde C_1,\tilde g_1, e^{-\tilde\gamma-\tilde f})$ with $\kappa =0$, $\alpha=-2$, $N=\tilde C_1$, and $\partial N=\emptyset$. Note that there is no free boundary in this application because all the boundary components are sent to infinity in the complete metric $\tilde g_1$ by the weight $e^{\tilde f}$. Thus, we obtain that $d_1+d_2$ is constant and $\Sigma_1,\Sigma_2$ are totally geodesic with respect to the metric $e^{-\tilde\gamma-\tilde f}\tilde g_1 = e^{-\tilde\gamma}\tilde g_0 = \Pi^*(e^{-\gamma} g_{\R^3})$. 
This implies that $\Sigma$ is totally geodesic in $B_R(0)$ with respect to the Gaussian metric $e^{-\gamma}g_{\R^3}$. By unique continuation (since the second fundamental form of a self-shrinker satisfies an elliptic equation, see e.g. \cite{ColdingMinicozzi2012}*{Lemma~10.8}), this implies that $\Sigma$ is totally geodesic in $\R^3$ with respect to the Gaussian metric.
We will now show that $\Sigma$ is either a plane passing through the origin or the sphere of radius $2$. In either cases, we conclude the proof, because they both imply that $\Pi^{-1}(\Sigma)$ is connected and therefore $\Sigma\cap M$ is a strong Heegaard splitting of $M$.

So, let us assume that $\Sigma$ is a (smooth, complete, properly embedded) surface in $\R^3$ that is totally geodesic with respect to the Gaussian metric. Then, the Euclidean second fundamental form of $\Sigma$ satisfies the equation
\[
A_{\R^3}^\Sigma = -\frac{x^\perp}4 g_{\R^3} = -\frac 14\sk{x,\nu}\nu\, g_{\R^3}
\]
for every $x\in\Sigma$. Here, $x^\perp$ is the projection of the position vector $x$ on the normal to $\Sigma$ at $x$, and $\nu$ is a choice of unit normal to $\Sigma$.

Let us distinguish two cases:
\begin{itemize}
\item Assume $\sk{x,\nu}$ vanishes somewhere, say at $x_0$. Then $T_{x_0}\Sigma$ is a plane passing through the origin, which is also a totally geodesic surface with respect to the Gaussian metric. Since totally geodesic surfaces are uniquely determined by their tangent plane in a point, $\Sigma$ coincides with $T_{x_0}\Sigma$.
\item Otherwise, assume that $\sk{x,\nu}$ is never zero. without loss of generality, we can assume that $\sk{x,\nu}>0$ on $\Sigma$. In particular, $\Sigma$ is a properly embedded mean convex self-shrinker in $\R^3$, this implies that $\Sigma$ is either a plane through the origin or the sphere of radius $2$ by \cite{ColdingMinicozzi2012}*{Theorem~0.17} (note that the polynomial volume growth assumption is satisfied here thanks to \cite{ChengZhou2013}*{Theorem~1.3}).
\end{itemize}
In both cases, we get that $\Sigma$ is either a plane through the origin or the sphere of radius $2$, which concludes the proof.
\end{proof}

\begin{proof}[Proof of \cref{thm:selfshrinkers}]
Let $\Sigma$ and $\Sigma'$ be two self-shrinkers in $\R^3$ with the same genus and with isomorphic graphs at infinity. 
Thanks to the structure theorem \cite{Wang2016}*{Theorem~1.1}, there exists $R\ge 2\sqrt{2}$ sufficiently large such that $\Sigma\setminus B_R(0)$ is smoothly isotopic in $\R^3\setminus B_R(0)$ to the cone over $\Sigma\cap \partial B_R(0)$, through an isotopy fixing $\partial B_R(0)$. Let $H_1\colon[0,1]\times\R^3\to\R^3$ be such isotopy of $\R^3$, which is the identity on $B_R(0)$. 
Similarly, we can assume $R$ to be also sufficiently large such that there exists a smooth isotopy $H_2\colon [0,1]\times\R^3\to\R^3$ that is the identity on $B_R(0)$ and maps $\Sigma'\setminus B_R(0)$ to the cone over $\Sigma'\cap \partial B_R(0)$.

If we further assume $R\ge 2\sqrt{2}$, thanks to \cref{thm:selfshrinkers} and \cref{thm:HeegaardImpliesIsotopic}, there exists a smooth isotopy $H_0\colon[0,1]\times \overline{B_R(0)}\to \overline{B_R(0)}$ of $B_R(0)$ mapping $\Sigma$ to $\Sigma'$. Let us extend $H_0$ to an isotopy $\tilde H_0\colon[0,1]\times\R^3\to\R^3$, by rescaling in $\R^3\setminus B_R(0)$. Namely, $\tilde H_0(t,x) = \frac{\abs{x}}{R} H_0(t,R\frac{x}{\abs{x}})$ for all $x\in \R^3\setminus B_R(0)$.
Then, the isotopy $H\colon[0,1]\times\R^3\to\R^3$ defined as $H(t,\cdot) = (H_2(t,\cdot))^{-1}\circ \tilde H_0(t,\cdot) \circ H_1(t,\cdot)$ maps $\Sigma$ to $\Sigma'$, which is the desired result.
\end{proof}

\bibliography{biblio}

\printaddress
\end{document}